\author[Mathieu Guay-Paquet and Alejandro H. Morales and Eric Rowland]{
  Mathieu Guay-Paquet\thanks{Supported by an NSERC Postdoctoral Fellowship} \and
  Alejandro H. Morales\thanks{Supported by a CRM-ISM Postdoctoral Fellowship} \and
  Eric Rowland}
\title{Structure and enumeration of $(3+1)$-free posets (extended abstract)}
\address{
  LaCIM,
  Universit\'e du Qu\'ebec \`a Montr\'eal,
  201 Pr\'esident-Kennedy,
  Montr\'eal QC~~H2X~3Y7, Canada}
\keywords{(3+1)-free posets, trace monoid, generating functions, chromatic symmetric function}
\newcommand{\QQ}{\rationals}
\newcommand{\JJ}{\mathcal{J}}
\newcommand{\Bl}{B_{\textnormal{lbl}}}
\newcommand{\Bu}{B_{\textnormal{unl}}}
\newcommand{\pl}{p_{\textnormal{lbl}}}
\newcommand{\pu}{p_{\textnormal{unl}}}
\newcommand{\Tl}{T_{\textnormal{lbl}}}
\newcommand{\Tu}{T_{\textnormal{unl}}}
\newcommand{\tbot}{\mathrel{\vcenter{\hbox{\includegraphics{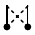}}}}}
\newcommand{\ttop}{\mathrel{\vcenter{\hbox{\includegraphics{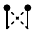}}}}}
\newcommand{\clone}{\approx}
\newcommand{\aut}{\mathop{\mathrm{Aut}}}
\newcommand{\abs}[1]{\left|{#1}\right|}
\newtheorem{theorem}{Theorem}[section]
\newtheorem{lemma}[theorem]{Lemma}
\newtheorem{proposition}[theorem]{Proposition}
\newtheorem{corollary}[theorem]{Corollary}
\newtheorem{definition}[theorem]{Definition}
\newtheorem{example}[theorem]{Example}
\newtheorem{remark}[theorem]{Remark}
\begin{document}
\maketitle
\begin{abstract}
\paragraph{Abstract.}
A poset is $(3+1)$-free if it does not contain the disjoint union of chains of length 3 and 1 as an induced subposet.
These posets are the subject of the $(3+1)$-free conjecture of Stanley and Stembridge.
Recently, Lewis and Zhang have enumerated \emph{graded} $(3+1)$-free posets, but until now the general enumeration problem has remained open.
We enumerate all $(3+1)$-free posets by giving a decomposition into bipartite graphs, and obtain generating functions for $(3+1)$-free posets with labelled or unlabelled vertices.

\paragraph{R\'esum\'e.}
Un poset sans $(3+1)$ est un poset qui n'a pas de sous-poset induit form\'e de deux cha\^{\i}nes disjointes de longeur 3 et 1.
Ces posets sont l'objet de la conjecture $(3+1)$ de Stanley et Stembridge.
R\'ecemment, Lewis et Zhang on \'enum\'er\'e les posets \emph{\'etag\'es} sans $(3+1)$, mais en g\'en\'eral la question d'\'enum\'eration est rest\'ee ouverte jusqu'\`a maintenant.
Nous \'enum\'erons tous les posets sans $(3+1)$ en donnant une d\'ecomposition de ces posets en graphes bipartis, et obtenons des fonctions g\'en\'eratrices qui les \'enum\`erent, qu'ils soient \'etiquet\'es ou non.
\end{abstract}

\section{Introduction}\label{sec:intro}

A poset $P$ is \emph{$(i+j)$-free} if it contains no induced subposet that is isomorphic to the poset consisting of two disjoint chains of lengths $i$ and $j$.
In particular, $P$ is $(3+1)$-free if there are no vertices $a, b, c, d \in P$ such that $a < b < c$ and $d$ is incomparable to $a$, $b$, and $c$.

Posets that are $(3+1)$-free play a role in the study of
Stanley's chromatic symmetric function~\cite{St1, St2}, a symmetric
function associated with a poset that generalizes the chromatic
polynomial of a graph.
Namely, a well-known conjecture of Stanley and Stembridge~\cite{StSt} is that the chromatic symmetric function of a $(3+1)$-free poset has positive coefficients in the basis of elementary symmetric functions. As evidence toward this conjecture, Stanley~\cite{St1} verified the conjecture for the class of $3$-free posets, and Gasharov~\cite{G2} has shown the weaker result that the chromatic symmetric function of a $(3+1)$-free poset is Schur-positive.

To make more progress toward the Stanley--Stembridge conjecture, a
better understanding of $(3+1)$-free posets is needed. Reed and
Skandera~\cite{Sk1, SkR} have given structural results and a
characterization of $(3+1)$-free posets in terms of their antiadjacency matrix. 
In addition, certain families of $(3+1)$-free posets have been
enumerated. For example, the number of $(3+1)$-and-$(2+2)$-free
posets with $n$ vertices is the $n$th Catalan number~\cite[Ex.~6.19(ddd)]{EC2};
Atkinson, Sagan and Vatter~\cite{ASV} have enumerated the permutations
that avoid the patterns $2341$ and $4123$, which give rise to the 
$(3+1)$-free posets of dimension two; and Lewis and
Zhang~\cite{LZ} have made significant progress by enumerating \emph{graded} $(3+1)$-free posets in terms of bicoloured graphs\footnote{Throughout this paper, a \emph{bicoloured} graph is a bipartite graphs with a specified ordered bipartition. For example, there are 2 bicoloured graphs with 1 vertex, 6 bicoloured graphs with 2 labelled vertices, and 4 bicoloured graphs with 2 unlabelled vertices.} using a new structural decomposition.
However, until now the general enumeration problem for $(3+1)$-free posets remained open~\cite[Ex.~3.16(b)]{EC1}.

In this paper, we give generating functions for
$(3+1)$-free posets with unlabelled and labelled vertices in terms
of the generating functions for bicoloured graphs with unlabelled and
labelled vertices, respectively. As in the graded case, the two problems are equally hard, although the enumeration problem for bicoloured graphs has received more attention.

In the unlabelled case, let $\pu(n)$ be the number of $(3+1)$-free posets with $n$ unlabelled vertices, and let $S(c,t)$ be the unique formal power series solution (in $c$ and $t$) of the cubic equation
\begin{equation}\label{recurrTandC}
  S(c,t) = 1 + \frac{c}{1+c}S(c,t)^2 + tS(c,t)^3.
\end{equation}
We show that the ordinary generating function for unlabelled $(3+1)$-free posets is
\begin{equation}\label{ordgs}
  \sum_{n\geq 0} \pu(n) x^n
    = S\big(x/(1-x), 1-2x-\Bu(x)^{-1}\big),
\end{equation}
where $\Bu(x) = 1 + 2x + 4x^2 + 8x^3 + 17x^4 + \cdots$ is the ordinary generating function for unlabelled bicoloured graphs. Before our investigation, the On-Line Encyclopedia of Integer Sequences~\cite{OEIS} had 22 terms in the entry~\cite[\href{http://oeis.org/A049312}{A049312}]{OEIS} for the coefficients of $\Bu(x)$, but only 7 terms in the entry~\cite[\href{http://oeis.org/A079146}{A079146}]{OEIS} for the numbers $\pu(n)$. Using~\eqref{ordgs}, we have closed this gap; the numbers $\pu(n)$ for $n = 0, 1, 2, \ldots, 22$ are
\begin{quote}
1, 1, 2, 5, 15, 49, 173, 639, 2469, 9997, 43109, 205092, 1153646,
8523086, 91156133, 1446766659, 32998508358, 1047766596136,
45632564217917, 2711308588849394,
\linebreak
 219364550983697100, 24151476334929009951, 3618445112608409433287.
\end{quote}

Similarly, in the labelled case, let $\pl(n)$ be the number of $(3+1)$-free posets with $n$ labelled vertices. We show that the exponential generating function for labelled $(3+1)$-free posets is
\begin{equation}\label{expgs}
  \sum_{n\geq 0} \pl(n) \frac{x^n}{n!}
    = S\big(e^x-1, 2e^{-x} -1 - \Bl(x)^{-1}\big),
\end{equation}
where $\Bl(x) = \sum_{n\geq 0} \sum_{i=0}^n \binom{n}{i} 2^{i(n-i)} \frac{x^{n}}{n!}$ is the exponential generating function for labelled bicoloured graphs. Such bicoloured graphs are easy to count, but before our investigation the OEIS had only 9 terms in the entry~\cite[\href{http://oeis.org/A079145}{A079145}]{OEIS} for $\pl(n)$. Using~\eqref{expgs}, arbitrarily many terms $\pl(n)$ can be computed.

Our main tool is a new decomposition of $(3+1)$-free posets into \emph{parts} (called \emph{clone sets} and \emph{tangles}).
This \emph{tangle decomposition} is compatible with the automorphism group, in the sense that for a $(3+1)$-free poset $P$, $\aut(P)$ breaks up as the direct product of the automorphism groups of its parts.
The tangle decomposition also generalizes a decomposition of Reed and Skandera~\cite{SkR} for $(3+1)$-and-$(2+2)$-free posets given by \emph{altitudes} of vertices.
In terms of generating functions, the restriction of our results to $(3+1)$-and-$(2+2)$-free posets corresponds to the specialization $t=0$ in~\eqref{recurrTandC}. Indeed, one can see that $S(x/(1-x),0)$ satisfies the functional equation for the Catalan generating function, which is consistent with the enumeration result stated earlier for $(3+1)$-and-$(2+2)$-free posets~\cite[Ex.~6.19(ddd)]{EC2}.

\begin{remark}
Using the tangle decomposition it is possible to quickly generate all $(3+1)$-free posets of a given size up to isomorphism in a straightforward way (see \autoref{cor:data}). With this approach, we were able to list all $(3+1)$-free posets on up to 11 vertices in a few minutes on modest hardware.
Note that this technique can accommodate the generation of interesting subclasses of $(3+1)$-free posets (\textit{e.g.}, $(2+2)$-free, weakly graded, strongly graded, co-connected, fixed height) or constructing these posets from the bottom up, level by level (which can help compute invariants like the chromatic symmetric function).
\end{remark}

\begin{remark}
Comparing the list of numbers above with data provided by Joel Brewster
Lewis for the number of graded $(3+1)$-free posets~\cite[\href{http://oeis.org/A222863}{A222863}, \href{http://oeis.org/A222865}{A222865}]{OEIS}, it appears that, asymptotically,
almost all $(3+1)$-free posets are graded.
We prove this in the full version of this paper \cite{GPMR}, building on the asymptotic analysis of Lewis and Zhang for the graded $(3+1)$-free posets.
In fact, almost all $(3+1)$-free posets are $3$-free, so their Hasse diagrams are bicoloured graphs.
\end{remark}

\noindent\textsc{Outline.}
In \autoref{sec:partition}, we describe the
tangle decomposition of a $(3+1)$-free poset into clone sets and
tangles and use it to compute the poset's automorphism group. In
\autoref{sec:skeleta}, we describe the relationships between the
different clone sets and tangles of a $(3+1)$-free poset as parts of a
structure called the \emph{skeleton} and enumerate the possible
skeleta. In \autoref{sec:genfunc}, we enumerate tangles in terms of
bicoloured graphs, and as a result we obtain generating functions for
$(3+1)$-free posets.

\section{The tangle decomposition}\label{sec:partition}

Throughout the paper, we assume that $P$ is a $(3+1)$-free poset.
We write $a \parallel b$ if vertices $a$ and $b$ in a poset are incomparable.
In this section, we describe the tangle decomposition of a $(3+1)$-free poset.

Given a vertex $a \in P$, we write $D_a = \{x \in P : x < a\}$ and $U_a = \{x \in P : x > a\}$ for the (strict) downset and upset of $a$.
The set $\JJ(P)$ of all downsets of $P$ (that is, all downward closed subsets of $P$, not just those of the form $D_a$ for some $a\in P$) forms a distributive lattice, and in particular a poset, under set inclusion.
Similarly, the set of upsets of $P$ forms a poset under set inclusion, but it will be convenient for us to consider instead the complements $P \setminus U_a \in \JJ(P)$ of the upsets of vertices $a \in P$.

\begin{definition}
The \emph{view} $v(a)$ from a vertex $a \in P$ is the pair $(D_a, P \setminus U_a) \in \JJ(P) \times \JJ(P)$.
If $v(a) = v(b)$, then we say $a$ and $b$ are \emph{clones} and write $a \clone b$.
\end{definition}

Note that the set $v(P)$ of views of all vertices of $P$ inherits a poset structure from the set $\JJ(P) \times \JJ(P)$, where $v(a) \leq v(b)$ if and only if $D_a \subseteq D_b$ and $U_a \supseteq U_b$.

Also note that two vertices $a, b \in P$ are clones precisely when they are \emph{interchangeable}, in the sense that the permutation of the vertices of $P$ which only exchanges $a$ and $b$ is an automorphism of $P$.

\begin{example}
  \autoref{example poset} shows a $(3+1)$-free poset and its view poset. Since $v(d) = v(e)$, we have $d \clone e$.
\end{example}

\begin{remark}
The notion of clones is related to the notion of {\em trimming} of Lewis and Zhang~\cite{LZ}. Also, Zhang~\cite{YXZ} has used techniques involving clones and $(2+2)$-avoidance to prove enumeration results about families of graded posets.
\end{remark}

\begin{definition}
Let $a, b \in P$.
We write $a \ttop b$ if $D_a \parallel D_b$, and we write $a \tbot b$ if $U_a \parallel U_b$.
\end{definition}

\begin{figure}[t]
\[
\vcenter{\hbox{\includegraphics{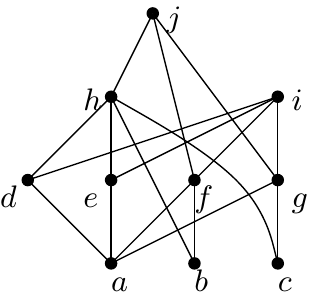}}}
\quad
\begin{array}{r@{{}=(\{}l@{,\{}l@{)}}
v(j)&abcdefgh\}   &abcdefghij\} \\   
v(i)&abcdefg\}   &abcdefghi\}    \\
v(h)&abcde\}   &abcdefghi\}    \\
v(g)&ac\}   &abcdefgh\}    \\
v(f)&ab\}   &abcdefgh\}    \\
v(e)&a\} &abcdefg\}    \\
v(d)&a\}   &abcdefg\}    \\
v(c)&\}   &abcdef\}   \\
v(b)&\}   &abcde\}   \\
v(a)&\}  & abc\}    
\end{array}
\quad
\vcenter{\hbox{\includegraphics{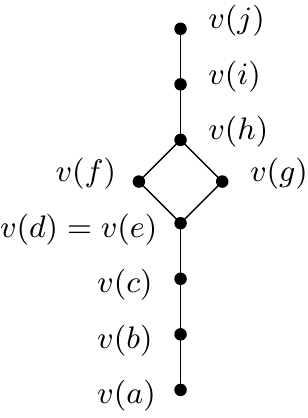}}}
\]
\caption{Left: the Hasse diagram of a $(3+1)$-free poset $P$ with $10$
  vertices. Centre: the list of views of the vertices of $P$. Right: the view poset $v(P)$.}
\label{example poset}
\end{figure}

\pagebreak

The idea behind the notation is the following.
If $a \ttop b$, then there is some vertex $c \in D_a \setminus D_b$, so that $c < a$ and $c \not< b$, and there is some $d \in D_b \setminus D_a$, so that $d \not< a$ and $d < b$. Then, it can be checked that $a, b, c, d$ are distinct vertices, and that they are incomparable except for the two relations $c < a$ and $d < b$. Hence we have the following induced $(2+2)$ subposet with $a$ and $b$ on the top:
\begin{center}
	\includegraphics{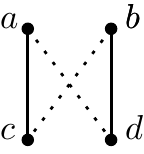}
\end{center}
Dually, if $a \tbot b$ then there is an induced $(2+2)$ subposet with $a$ and $b$ on the bottom.

\begin{example}
  In \autoref{example poset}, we have $f \ttop g$ and $b \tbot c$, but \emph{not} $a \tbot b$, since $U_b \subseteq U_a$.
\end{example}

The following lemma records basic properties of the relations $\clone$, $\ttop$, and $\tbot$ and their interactions.

\begin{lemma}\label{lem:relations}
  Let $P$ be a $(3+1)$-free-poset, and let $a, b, c$ be any vertices of $P$.
  \begin{enumerate}[(i),nosep]
    \item If $a \clone b$ and $b \clone c$, then $a \clone c$.
    \item If $a \ttop b$ and $b \clone c$, then $a \ttop c$.
    \item If $a \tbot b$ and $b \clone c$, then $a \tbot c$.
    \item If $a \ttop b$, then $U_a = U_b$.
    \item If $a \tbot b$, then $D_a = D_b$.
    \item We have $v(a) \parallel v(b)$ if and only if $a \ttop b$ or $a \tbot b$.
    \item\label{item:rel4} It is not the case that both $a \ttop b$ and $b \tbot c$.
  \end{enumerate}
\end{lemma}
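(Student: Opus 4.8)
The plan is to argue by contradiction: assuming both $a \ttop b$ and $b \tbot c$, I would exhibit an induced $(3+1)$ subposet, contradicting the hypothesis that $P$ is $(3+1)$-free. The key preliminary move is to normalize the data using the earlier parts of the lemma. By part~(iv), $a \ttop b$ gives $U_a = U_b$, and by part~(v), $b \tbot c$ gives $D_b = D_c$. These two equalities are precisely what allow conditions phrased in terms of $b$ to be transferred to $a$ and to $c$, and they are the reason no case analysis is required.

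Next I would produce the three elements of a chain. Since $a \ttop b$ means $D_a \parallel D_b$, I can choose a witness $p \in D_a \setminus D_b$; since $b \tbot c$ means $U_b \parallel U_c$, I can choose a witness $r \in U_b \setminus U_c$. Then $p < a$ holds directly, and $a < r$ holds because $r \in U_b = U_a$; hence $p < a < r$ is a chain on three distinct vertices.

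The heart of the argument is to show that $c$ is incomparable to each of $p$, $a$, and $r$, so that $\{p, a, r, c\}$ induces a $(3+1)$. For $a \parallel c$ I would use $D_a \parallel D_c$ (which holds since $D_c = D_b$), because any comparability between $a$ and $c$ would force one downset to contain the other. Then $p \parallel c$ follows since $p \notin D_b = D_c$ gives $p \not< c$, while $c < p$ would yield $c < a$ against $a \parallel c$; dually $r \parallel c$ follows from $r \notin U_c$ together with the fact that $r < c$ would give $a < c$. A short distinctness check—each of $p, a, r$ is comparable to the others but incomparable to $c$, so $c \notin \{p,a,r\}$—then completes the contradiction.

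I expect the main obstacle to be psychological rather than computational. The natural first attempt is to take the two $(2+2)$ subposets furnished by the discussion preceding the lemma and glue them along the shared vertex $b$; this forces one to decide the cross-relations among the four non-$b$ witnesses and splinters into several cases. Recognizing that parts~(iv) and~(v) collapse $U_a$ onto $U_b$ and $D_c$ onto $D_b$—so that a single pair of witnesses $p, r$ and the single spare vertex $c$ already suffice—is the step that makes the proof clean and case-free.
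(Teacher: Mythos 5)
Your argument for item (vii) is correct and, granting items (iv) and (v), essentially optimal: the witnesses $p \in D_a \setminus D_b$ and $r \in U_b \setminus U_c$, combined with $U_a = U_b$ and $D_b = D_c$, do yield the chain $p < a < r$ with $c$ incomparable to all three, and your checks (e.g., $c < p$ would force $c < a$ against $a \parallel c$, and $r < c$ would force $a < c$) are all sound. The paper, being an extended abstract, states this lemma without proof, so there is nothing to compare against directly; judged on its own terms, your reduction of (vii) to (iv) and (v) is a clean way to avoid the case analysis that gluing two $(2+2)$ subposets along the shared vertex $b$ would entail.

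The genuine gap is coverage: the statement is a seven-part lemma and you prove only part (vii). Parts (i)--(iii) are routine consequences of the definitions of views and clones, but parts (iv), (v) and (vi) are not free, and your proof of (vii) explicitly leans on (iv) and (v), so they cannot be waved away as ``earlier parts.'' Each of them needs its own $(3+1)$-construction of exactly the flavour you use for (vii). For (iv): from $a \ttop b$ take $p \in D_a \setminus D_b$ and, supposing $U_a \neq U_b$, say $e \in U_a \setminus U_b$; then $p < a < e$ is a chain, and $b$ (which is incomparable to $a$ because $D_a \parallel D_b$) is also incomparable to $p$ and to $e$, giving a forbidden $(3+1)$; by symmetry $U_a = U_b$. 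Part (v) is dual. For the backward direction of (vi) one must additionally rule out the configuration where $D_a$ is strictly contained in $D_b$ and simultaneously $U_a$ is strictly contained in $U_b$ (which would make the views incomparable without $a \ttop b$ or $a \tbot b$); this again produces a $(3+1)$ from $p \in D_b \setminus D_a$, $e \in U_b \setminus U_a$, and the chain $p < b < e$ with $a$ incomparable to all three. Without these pieces, the proof of the lemma as stated is incomplete.
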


Now, consider a graph $\Gamma$ on the vertices of $P$ with edge set $\{(a, b) : a \ttop b\}$.
We say that a subset $A \subseteq P$ is the \emph{top of a tangle} if $\abs{A} \geq 2$ and $A$, when viewed as a subset of $V(\Gamma)$, is a connected component of $\Gamma$.
Analogously, a subset $B \subseteq P$ is the \emph{bottom of a tangle} if $\abs{B} \geq 2$ and $B$ is a connected component under the relation $\tbot$.

By conclusion~\ref*{item:rel4} of \autoref{lem:relations}, if $A$ is the top of a tangle and $B$ is the bottom of a tangle, then $A \cap B = \emptyset$. Let us say that a top of a tangle $A$ and a bottom of a tangle $B$ are \emph{matched} if there is an induced $(2+2)$ subposet whose top two vertices are in $A$, and whose bottom two vertices are in $B$.

\begin{proposition}\label{prop:matching}
  In a $(3+1)$-free poset $P$, every top of a tangle is matched to a unique bottom of a tangle, and every bottom of a tangle is matched to a unique top of a tangle. That is, there is a perfect matching between tops of tangles and bottoms of tangles of $P$.
\end{proposition}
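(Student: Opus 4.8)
The plan is to construct an explicit map $\beta$ sending each top of a tangle to a bottom of a tangle and to show it realizes the matching. I would first record two facts. By conclusion~(iv) of \autoref{lem:relations}, all vertices of a single top of a tangle $A$ share one upset, so $A$ is an antichain: if $a < a'$ with $a,a' \in A$ then $a' \in U_a = U_{a'}$, which is absurd. Dually, every bottom of a tangle is an antichain with a common downset. Second, I would use the local dictionary between $(2+2)$'s and edges: the top pair $\{s,s'\}$ and bottom pair $\{t,t'\}$ of any induced $(2+2)$ satisfy $s \ttop s'$ and $t \tbot t'$ straight from the definitions, while conversely (as explained just before the statement) every edge $s \ttop s'$ is witnessed by a $(2+2)$ whose bottom pair may be taken to be any $t \in D_s \setminus D_{s'}$ and $t' \in D_{s'} \setminus D_s$. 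In particular, the top pair of any matching $(2+2)$ for $A$ is an edge of $A$ and its bottom pair is such a witnessing pair.

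The heart of the argument is to show that, as the edge ranges over $A$ and the witnessing pair over all admissible choices, the bottom vertices always lie in one fixed $\tbot$-component, which I call $\beta(A)$. I would prove this in two steps. For a fixed edge $a \ttop a'$, any two witnessing pairs $\{c,d\}$ and $\{\tilde c, \tilde d\}$ (with $c, \tilde c \in D_a \setminus D_{a'}$ and $d, \tilde d \in D_{a'} \setminus D_a$) lie in one component, because comparing upsets gives $c \tbot \tilde d$ (one finds $a \in U_c \setminus U_{\tilde d}$ and $a' \in U_{\tilde d} \setminus U_c$), which together with $c \tbot d$ and $\tilde c \tbot \tilde d$ connects all four. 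For two edges $a \ttop a'$ and $a' \ttop a''$ meeting at $a'$, I would show the two components coincide by splitting on whether $D_{a'} \subseteq D_a \cup D_{a''}$. If not, a vertex $w \in D_{a'} \setminus (D_a \cup D_{a''})$ is simultaneously a witnessing bottom for both edges and forces equality. If so, choosing $d \in D_{a'} \setminus D_a$ (hence $d \in D_{a''}$) and $c' \in D_{a'} \setminus D_{a''}$ (hence $c' \in D_a$), a second upset comparison yields $c' \tbot d$ (now $a \in U_{c'} \setminus U_d$ and $a'' \in U_d \setminus U_{c'}$), again linking the two components. Since $A$ is connected, any two of its edges are joined by a sequence of edges meeting consecutively at vertices, so these two steps propagate to a single $\tbot$-component $\beta(A)$; containing a pair $t \tbot t'$, it has at least two elements and so is a bottom of a tangle. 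I expect this case analysis for adjacent edges to be the main obstacle.

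It then remains to assemble the bijection. Every matching $(2+2)$ for $A$ contributes an edge of $A$ on top and a witnessing pair on the bottom, and that pair lies in $\beta(A)$; since a bottom of a tangle is a full $\tbot$-component, the bottom it matches must be exactly $\beta(A)$. Conversely $A$ is matched to $\beta(A)$, because $A$ (being connected with $\abs{A} \ge 2$) has an edge, which is witnessed by a $(2+2)$ with bottom pair in $\beta(A)$. Hence each top is matched to the unique bottom $\beta(A)$. Running the whole argument upside down gives, for each bottom $B$, a unique matched top $\tau(B)$. As ``$A$ matched to $B$'' is one and the same statement read from either side, $\tau(\beta(A)) = A$ and $\beta(\tau(B)) = B$, so $\beta$ is a bijection and the matching between tops and bottoms of tangles is perfect.
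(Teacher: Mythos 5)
Your proof is correct: the antichain observation, the dictionary between $\ttop$-edges and induced $(2+2)$'s, the cross-pair comparison giving $c \tbot \tilde d$, and the two-case analysis for edges sharing a vertex all check out, and together they do force all witnessing bottom pairs over a connected top $A$ into a single $\tbot$-component $\beta(A)$. The extended abstract defers the proof of this proposition to the full version, but your argument is built entirely from the ingredients the paper itself sets up (the $(2+2)$ witness construction and \autoref{lem:relations}), so it is exactly in the intended spirit.
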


\autoref{prop:matching} justifies the terms `top of a tangle' and `bottom of a tangle' and the following definition.

\begin{definition}
  A \emph{tangle} is a matched pair $T = (A, B)$ of a top of a tangle $A$ and a bottom of a tangle $B$.
\end{definition}

In other words, a tangle is a subposet of $P$ that is connected by induced $(2+2)$ subposets.
In particular, $P$ is $(2+2)$-free exactly when it has no tangles.

\begin{example}
  Very often, a two-level poset which is not connected consists of a single tangle. For example, let $P$ be the poset with vertices $\{a_1, a_2, a_3, c_1, c_2\} \cup \{b, d\}$ and relations $a_i > c_j$, $b > d$. Then, the connected components of $P$ are $\{a_1, a_2, a_3, c_1, c_2\}$ and $\{b, d\}$. Every subset of the form $\{a_i, b, c_j, d\}$ forms an induced $(2+2)$ subposet, so $\{a_1, a_2, a_3, b\}$ is the top of a tangle, $\{c_1, c_2, d\}$ is the bottom of a tangle, and the whole poset $P$ is a single tangle.
\end{example}

\begin{example}
  In the poset $P$ of \autoref{example poset}, the connected component of $f$ under $\ttop$ is $\{f, g\}$, and the connected component of $b$ under $\tbot$ is $\{b, c\}$.  Therefore $P$ contains the tangle $T = (\{f, g\}, \{b, c\})$.  One can check that in fact this is the only tangle of $P$.
\end{example}

\begin{definition}
  Let $T_1 = (A_1, B_1), \ldots, T_s = (A_s, B_s)$ be the tangles of $P$.
  A \emph{clone set} is an equivalence class, under $\clone$, of vertices in $P \setminus \bigcup_{j=1}^s (A_j \cup B_j)$.
  We refer to tangles and clone sets as \emph{parts} of $P$.
The set of parts is the \emph{tangle decomposition} of $P$.
\end{definition}

\begin{example}
  The tangle decomposition of the poset in \autoref{example poset} appears in \autoref{small example}.  It consists of six parts\hspace{0.08em}---\hspace{0.08em}five clone sets and one tangle.
\end{example}

\begin{figure}[!h]
\begin{center}
\includegraphics{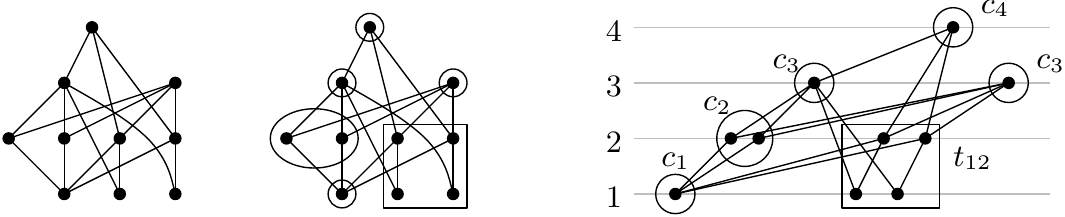}
\caption{Left: the Hasse diagram of the poset $P$ from \autoref{example poset}. Centre: the tangle decomposition of $P$ into its parts. Right: a compatible listing of the parts.
Clone sets are enclosed in circles, and tangles are enclosed in boxes.}
\label{small example}
\end{center}
\end{figure}

The tangle decomposition provides a decomposition of a $(3+1)$-free poset from which the automorphism group, among other properties, can be computed. To show this, it will be useful to have a different characterization of the tops of tangles, bottoms of tangles, and clone sets of $P$ which gives a natural ordering of these subsets of $P$, as follows.
A \emph{co-connected component} of a poset $Q$ is a connected component of the incomparability graph of $Q$.

\begin{proposition}\label{preimages}
  Let $v(P) \subseteq \JJ(P) \times \JJ(P)$ be the poset of views of all vertices of the $(3+1)$-free poset $P$.
Then, there is a listing $(S_1, S_2, \ldots, S_k)$ of the co-connected components of $v(P)$ such that for every $x \in S_i$ and every $y \in S_{i+1}$, we have $x < y$. Moreover, the preimages $v^{-1}(S_i)$ for $i = 1, 2, \ldots, k$ are exactly the tops of tangles, bottoms of tangles, and clone sets of $P$.
\end{proposition}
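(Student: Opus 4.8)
The plan is to prove the two assertions separately: first the existence of the linear listing of the co-connected components of $v(P)$, and then the identification of the preimages $v^{-1}(S_i)$ with the parts of $P$. The first assertion is a general fact about posets that uses nothing special about $(3+1)$-freeness, so I would establish it directly for the poset $v(P)$. I would begin by noting that if two views lie in \emph{different} co-connected components, then they must be comparable, since an incomparability would place them in the same component of the incomparability graph. Next I would show that the comparison between two distinct co-connected components is uniform. It suffices to check that if $x \parallel x'$ lie in one component and $y$ lies in another, then $x < y$ holds if and only if $x' < y$ does (the general statement follows by walking along paths within each component); and this is immediate, because $x < y$ together with $y < x'$ would force $x < x'$, contradicting $x \parallel x'$. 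Uniformity then lets me define $S \prec S'$ whenever some, equivalently every, $x \in S$ and $y \in S'$ satisfy $x < y$. This relation is asymmetric, total, and transitive, so it linearly orders the co-connected components as $S_1 \prec \cdots \prec S_k$, which is exactly the required listing.

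For the second assertion, the engine is the dichotomy supplied by \autoref{lem:relations}. By part (vi), an edge of the incomparability graph of $v(P)$ between $v(a)$ and $v(b)$ occurs precisely when $a \ttop b$ or $a \tbot b$, and these two cases are mutually exclusive for a fixed edge, since part (iv) forces $U_a = U_b$ in one case while the definition of $\tbot$ forces $U_a \parallel U_b$ in the other. Part (vii) then says that no single view can be incident to both a $\ttop$-edge and a $\tbot$-edge, so along any path inside a co-connected component the edge type never changes. Hence every co-connected component $S_i$ is of exactly one of three kinds: a singleton, a set of at least two views joined by $\ttop$-edges, or a set of at least two views joined by $\tbot$-edges. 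I would then transport each kind through the quotient map $v \colon P \to v(P)$, whose fibres are the clone classes.

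The correspondence itself is where the care is needed. Using that $\ttop$ and $\tbot$ are compatible with $\clone$ (parts (ii) and (iii)), I would verify that $\ttop$-connectivity of a set of views lifts to $\ttop$-connectivity of its preimage and conversely: a path $v(z_0) \ttop v(z_1) \ttop \cdots$ in $v(P)$ lifts to $z_0 \ttop z_1 \ttop \cdots$ in $\Gamma$, and a clone of any vertex in a $\ttop$-component stays in that component. From this I would conclude that the preimage of a $\ttop$-type component is a connected component of $\Gamma$ with at least two vertices, that is, a top of a tangle, and dually for $\tbot$-type components and bottoms of tangles. A singleton component $\{v(a)\}$ has preimage equal to the clone class of $a$; since $v(a)$ is isolated, $a$ has no $\ttop$- or $\tbot$-neighbour, so this class avoids every tangle and is therefore a clone set. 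Running the same arguments in reverse shows that every top of a tangle, bottom of a tangle, and clone set arises as some $v^{-1}(S_i)$, giving the claimed bijection.

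The main obstacle I anticipate is not any single deep step but the disciplined bookkeeping of moving between the relation $\ttop$ on the vertices of $P$, where $\Gamma$ and the tops of tangles are defined, and the incomparability graph on the views $v(P)$, where the co-connected components live; clones collapse under $v$, so the $\ttop$/$\tbot$ structure must be shown to descend cleanly to the quotient and lift back. The facts that keep this routine rather than delicate are the $\clone$-invariance of $\ttop$ and $\tbot$ and, above all, the no-mixing statement in part (vii), which guarantees that each co-connected component is homogeneous in edge type and so the three classes of preimages are disjoint and exhaustive.
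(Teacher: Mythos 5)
Your proof is correct, and it is essentially the intended argument: the extended abstract omits a proof of this proposition, but your reasoning rests on exactly the parts of \autoref{lem:relations} (clone-invariance of $\ttop$ and $\tbot$ in (ii)--(iii), the characterization of incomparability of views in (vi), and the no-mixing statement (vii)) that the paper sets up immediately beforehand for this purpose. The first half --- that the co-connected components of any poset form an ordinal sum and hence admit the required linear listing --- is a standard fact and you verify it correctly.
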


Let $\aut(P)$ be the automorphism group of the poset $P$.
Any part $X_i$ of $P$ gives an induced subposet of $P$, and we write
$\aut(X_i)$ for its automorphism group as a poset. In particular, if $X_i$
is a clone set with $k$ vertices, then $\aut(X_i)$ is the symmetric
group on these $k$ vertices; if $X_i$ is a tangle, then it can be seen as a bicoloured graph (with colour classes `top' and `bottom'), and $\aut(X_i)$ is the group of colour-preserving automorphisms of this graph.

\begin{theorem}
  Let $P$ be a $(3+1)$-free poset, decomposed into its clone sets $C_1, C_2, \ldots, C_r$ and its tangles $T_1, T_2, \ldots, T_s$. Then, the automorphism group of $P$ is
  \[
    \aut(P) = \prod_{i = 1}^r \aut(C_i) \times \prod_{j = 1}^s \aut(T_j).
  \]
\end{theorem}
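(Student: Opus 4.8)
The plan is to show that the restriction homomorphism
\[
 \Phi\colon \aut(P)\longrightarrow \prod_{i=1}^r \aut(C_i)\times\prod_{j=1}^s\aut(T_j),\qquad \phi\mapsto\bigl(\phi|_{C_1},\dots,\phi|_{C_r};\,\phi|_{T_1},\dots,\phi|_{T_s}\bigr),
\]
is a well-defined isomorphism. Well-definedness rests on the claim that every $\phi\in\aut(P)$ fixes each part of $P$ setwise. First I would note that $\phi$ descends to a bijection $\bar\phi$ of $v(P)$: since $D_{\phi(a)}=\phi(D_a)$ and $U_{\phi(a)}=\phi(U_a)$, clones are sent to clones, and $\bar\phi$ is an order-automorphism of the poset $v(P)$. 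Hence $\bar\phi$ is an automorphism of the incomparability graph of $v(P)$ and permutes its co-connected components $S_1,\dots,S_k$. By \autoref{preimages} these components are totally ordered (everything in $S_i$ lies below everything in $S_{i+1}$), and $\bar\phi$ respects this order; since the only order-preserving permutation of a finite chain is the identity, $\bar\phi(S_i)=S_i$ for all $i$, so $\phi$ fixes every part $v^{-1}(S_i)$ setwise.

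With this in hand the forward direction is routine. On a clone set $C_i$ (an antichain of mutually interchangeable vertices) every bijection is a poset automorphism, so $\phi|_{C_i}$ is an arbitrary element of the symmetric group $\aut(C_i)$. Because $\phi$ preserves the poset-defined relations $\ttop$ and $\tbot$ and fixes each part setwise, it fixes each top $A_j$ and each bottom $B_j$ of a tangle setwise; thus $\phi|_{T_j}$ is a colour-preserving automorphism of the bicoloured graph $T_j$, that is, $\phi|_{T_j}\in\aut(T_j)$. Since the parts partition the vertices of $P$, the map $\Phi$ is a homomorphism and is injective.

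The content of the theorem is the surjectivity of $\Phi$: given an arbitrary tuple of part-automorphisms, the map $\phi$ acting as the prescribed automorphism on each part must itself be an automorphism of $P$. As $\phi$ is a bijection, it suffices to check that $a<b\iff\phi(a)<\phi(b)$. Relations internal to a part are preserved by hypothesis (for a tangle this is exactly the statement that $\phi|_{T_j}$ preserves the comparabilities recorded by the bicoloured graph; a clone set has no internal relations). For a relation between two distinct parts, the key reduction is that it is \emph{rigid}, i.e.\ unchanged when we permute vertices within each part. For clone sets this is immediate, since clones share a view and therefore relate identically to every other vertex. The remaining, and main, point is the following homogeneity claim: an external vertex $w\notin A_j\cup B_j$ relates to all of a tangle top $A_j$ in the same way, and to all of a tangle bottom $B_j$ in the same way.

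I expect this homogeneity claim to be the crux. By \autoref{lem:relations}(iv) all vertices of $A_j$ share a common upset, so ``$w$ lies above $b$'' already holds uniformly for $b\in A_j$; the delicate point is separating ``$w<b$'' from ``$w\parallel b$''. I would prove this by propagating along a $\ttop$-edge of the connected top $A_j$: if $b\ttop b'$ with $w<b$ but $w\parallel b'$, take the induced $(2+2)$ with $c<b$ and $c'<b'$ furnished by $b\ttop b'$. One checks $U_c\parallel U_{c'}$, so $c\tbot c'$ and hence $c,c'$ lie in the matched bottom $B_j$; moreover $b\in U_w\setminus U_{c'}$ while $b'\in U_{c'}\setminus U_w$, so $U_w\parallel U_{c'}$, that is $w\tbot c'$. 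This places $w$ in the $\tbot$-component $B_j$, contradicting that $w$ is external. Thus $w$ is uniformly below all of $A_j$ or uniformly incomparable to all of $A_j$, and the dual argument (via \autoref{lem:relations}(v)) handles tangle bottoms. Granting the claim, permuting within $A_j$ or $B_j$ leaves every external relation unchanged, so $\phi$ preserves all comparabilities and is an automorphism; hence $\Phi$ is an isomorphism, yielding the asserted direct product decomposition.
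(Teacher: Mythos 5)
Your proof is correct and follows the route the paper sets up: the extended abstract states this theorem without proof (deferring details to the full version), but it introduces Proposition~\ref{preimages} precisely so that every automorphism, acting on the chain of co-connected components of $v(P)$, must fix each part setwise, which is exactly your well-definedness argument. Your homogeneity claim for surjectivity --- that an external vertex relates uniformly to a tangle top, proved by propagating along a $\ttop$-edge and deriving $w \tbot c'$ to force $w$ into the matched bottom --- is the substantive step the abstract omits, and your argument for it checks out.
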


Note that the tangle decomposition of a $(3+1)$-free poset $P$ into its parts generalizes the decomposition considered by Reed and Skandera~\cite{SkR} of a $(3+1)$-and-$(2+2)$-free poset given by the \emph{altitude} $\alpha(a) = \abs{D_a} - \abs{U_a}$ of the vertices $a \in P$, since the altitude $\alpha(a)$ is a function of the view $v(a)$. Of course, even in a $(3+1)$-free poset $P$ with an induced $(2+2)$ subposet, the altitude is well-defined, and it gives a finer decomposition of $P$ than the tangle decomposition.
However, the altitude decomposition is too fine, as the example in \autoref{fig:tau} shows.
Namely, there is an automorphism $\tau$ which swaps the two vertices with altitude $-1$, the two vertices with altitude $-2$, and two of the three vertices with altitude $2$, as illustrated. But there is no automorphism which acts nontrivially on a single block of the altitude decomposition.

In contrast, for the tangle decomposition, every automorphism of the poset can be factored as a product of automorphisms which only act nontrivially on a single part.

\begin{figure}[h]
  \begin{center}
    \includegraphics[scale=.8]{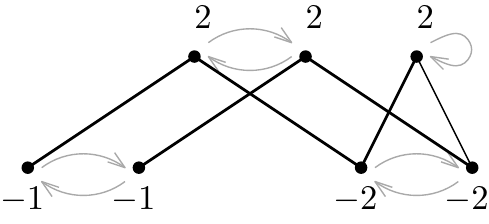}
    \caption{A poset $P$ consisting of a single tangle. The vertices are labelled by their altitude $\alpha$, and the arrows describe an automorphism $\tau$ of $P$.}
    \label{fig:tau}
  \end{center}
\end{figure}

\section{Skeleta}\label{sec:skeleta}

Any finite poset $P$ can be decomposed into \emph{levels} as follows: take $L_1$ to be the set of minimal vertices of $P$, $L_2$ to be the set of subminimal vertices (that is, the set of minimal vertices of $P \setminus L_1$), and so on up to the set $L_h$ of sub$^{(h-1)}$minimal vertices of $P$, where $h$ is the \emph{height} of $P$. We say that the \emph{level} of a vertex $a \in P$ is $\ell(a)$, where $a \in L_{\ell(a)}$.

If $P$ is $(3+1)$-free, then the only interesting part of the poset structure occurs between adjacent levels, as the following proposition shows.

\begin{proposition}[Lewis and Zhang~\cite{LZ}]\label{prop:two-levels}
  Let $P$ be a $(3+1)$-free poset and $a, b \in P$ be two vertices with $\ell(a) \leq \ell(b) - 2$. Then, we have $a < b$.
\end{proposition}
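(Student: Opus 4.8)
The plan is to exploit the fact that, for this level decomposition, $\ell(b)$ equals the number of vertices in a longest chain of $P$ whose top element is $b$. First I would record two elementary monotonicity properties of levels, both immediate from the inductive definition of the sets $L_1, L_2, \ldots, L_h$: if $u < v$ then $\ell(u) < \ell(v)$ (a longest chain ending at $u$ can be extended by $v$), and consequently any two vertices of the same level are incomparable. I would then produce a concrete $3$-chain sitting at the top of $b$. Writing $m = \ell(b)$, the hypothesis gives $m \geq \ell(a) + 2 \geq 3$, and unwinding the definition of the levels yields a chain $x_1 < x_2 < \cdots < x_m = b$ with $x_i \in L_i$: indeed $b$ is not minimal in $P \setminus (L_1 \cup \cdots \cup L_{m-2})$, so some $x_{m-1} < b$ has $\ell(x_{m-1}) = m-1$, and one iterates downward. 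In particular the three top vertices $x_{m-2} < x_{m-1} < x_m = b$ form an induced chain of length $3$, with respective levels $m-2, m-1, m$, each at least $\ell(a)$.

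Next I would invoke $(3+1)$-freeness. The vertices $x_{m-2}, x_{m-1}, b$ are distinct from $a$: we have $\ell(x_{m-1}) = m-1 \neq \ell(a)$ and $\ell(b) = m \neq \ell(a)$, while if $a = x_{m-2}$ then $a < b$ already. If $a$ were incomparable to all three, then $\{a, x_{m-2}, x_{m-1}, b\}$ would be an induced $(3+1)$, which is forbidden. Hence $a$ is comparable to some $x_i$ with $i \in \{m-2, m-1, m\}$. Since $\ell(x_i) = i \geq m-2 \geq \ell(a)$, the relation $a > x_i$ would force $\ell(a) > \ell(x_i) \geq \ell(a)$, a contradiction, so in fact $a < x_i \leq b$. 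Transitivity then gives $a < b$, as desired.

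The bookkeeping with levels is routine; the only real content is the identification in the first paragraph, namely that the level of a vertex is the length of a longest chain beneath it, since this is precisely what guarantees the $3$-chain $x_{m-2} < x_{m-1} < b$ that triggers the $(3+1)$ obstruction. I expect no serious difficulty beyond making this identification precise, and I anticipate that the proof of Lewis and Zhang proceeds along essentially these lines.
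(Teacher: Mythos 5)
Your proof is correct and complete: the identification of $\ell(b)$ with the length of a longest chain ending at $b$, the extraction of the chain $x_{m-2} < x_{m-1} < b$, and the level-monotonicity argument ruling out $a > x_i$ all check out. The paper itself gives no proof of this proposition---it is stated as a cited result of Lewis and Zhang---so there is nothing internal to compare against; your argument is the standard one and would serve as a self-contained proof.
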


Note that the covering relations of $P$ may include relations $a < b$ for which $\ell(a) = \ell(b) - 2$.
This occurs in \autoref{example poset}, for example, where $b < h$, $c < h$, $f < j$, and $g < j$ are covering relations.

\pagebreak

The following proposition gives a partial converse of \autoref{prop:two-levels}.

\begin{proposition}[Reed and Skandera~\cite{SkR}]\label{prop:partial-converse}
  Let $P$ be a poset such that for any two vertices $a, b \in P$ with
  $\ell(a) \leq \ell(b) - 2$, we have $a < b$. Then, $P$ is
  $(3+1)$-free if and only if for any two vertices $c, d \in P$ with
  $\ell(c) = \ell(d)$, we have $U_c \subseteq U_d$ or $D_c \subseteq
  D_d$ (and symmetrically, $U_c \supseteq U_d$ or $D_c \supseteq D_d$).
\end{proposition}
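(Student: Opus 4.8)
The plan is to prove both implications by contraposition, resting everything on three elementary facts about the level function $\ell$. First, along any chain the level strictly increases: if $a < b$ then $\ell(a) < \ell(b)$, directly from the definition of the $L_i$. Consequently, two vertices at the same level are always incomparable. Third, the hypothesis on $P$ is precisely that vertices whose levels differ by at least $2$ are comparable, with the lower one below the higher one. These three facts convert statements about the order relation into statements about levels, and that is what drives the whole argument.

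For the direction that $(3+1)$-freeness implies the inclusion condition, I would argue contrapositively: if $P$ contains a $(3+1)$, the condition fails. Suppose $a < b < c$ with $d \parallel a$, $d \parallel b$, and $d \parallel c$. By the first fact $\ell(a) < \ell(b) < \ell(c)$, so $\ell(c) \geq \ell(a)+2$. Since $d$ is incomparable to both $a$ and $c$, the hypothesis forces $\abs{\ell(d) - \ell(a)} \leq 1$ and $\abs{\ell(d) - \ell(c)} \leq 1$; combined with $\ell(c) \geq \ell(a)+2$, this pins down $\ell(c) = \ell(a)+2$, $\ell(b) = \ell(a)+1$, and $\ell(d) = \ell(a)+1 = \ell(b)$. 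Thus $b$ and $d$ lie at the same level, while $a \in D_b \setminus D_d$ (from $a < b$ and $a \parallel d$) and $c \in U_b \setminus U_d$ (from $b < c$ and $c \parallel d$). Hence $D_b \not\subseteq D_d$ and $U_b \not\subseteq U_d$, so the condition ``$U_b \subseteq U_d$ or $D_b \subseteq D_d$'' fails for the same-level pair $b, d$.

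For the converse I would run the same bookkeeping in reverse. Suppose the condition fails, so there are same-level vertices $b, d$ with $U_b \not\subseteq U_d$ and $D_b \not\subseteq D_d$. Choose witnesses $x \in U_b \setminus U_d$ and $y \in D_b \setminus D_d$, giving $y < b < x$. It remains to verify that $d$ is incomparable to each of $y, b, x$. We have $b \parallel d$ because they share a level. For $y$: we have $y \not< d$ by choice, while $d \not< y$ because $\ell(y) < \ell(b) = \ell(d)$ would be contradicted by $d < y$; hence $y \parallel d$. Symmetrically $x \parallel d$, using $x \not> d$ by choice and $\ell(x) > \ell(b) = \ell(d)$. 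Distinctness of $y, b, x, d$ is automatic, since $d$ is incomparable to the chain $y < b < x$. Therefore $\{y, b, x, d\}$ is an induced $(3+1)$, and $P$ is not $(3+1)$-free.

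Finally, the parenthetical symmetric statement ``$U_c \supseteq U_d$ or $D_c \supseteq D_d$'' is simply the principal condition applied to the ordered pair $(d, c)$, so no separate argument is needed. I expect the single load-bearing step to be the level computation in the forward direction that forces $\ell(b) = \ell(d)$: this is what localizes any $(3+1)$ to one pair of same-level vertices and matches it exactly against a failure of the inclusion condition. Everything else is routine deduction of incomparabilities from level inequalities.
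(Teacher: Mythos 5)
Your proof is correct. The paper states this proposition as a cited result of Reed and Skandera without reproducing a proof, so there is nothing internal to compare against; your argument is the natural one, and every step checks out. In particular, the two load-bearing points are handled properly: the level arithmetic in the forward direction (using $\ell(a)<\ell(b)<\ell(c)$ together with $\abs{\ell(d)-\ell(a)}\leq 1$ and $\abs{\ell(d)-\ell(c)}\leq 1$ to force $\ell(b)=\ell(d)$ and exhibit witnesses violating both inclusions), and in the reverse direction the verification that $d$ is incomparable to all of $y<b<x$, where $d\not<y$ and $x\not<d$ follow from the strict increase of $\ell$ along chains and $y\not<d$, $d\not<x$ follow from the choice of witnesses. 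The observation that the parenthetical symmetric condition is just the main condition applied to the reversed pair is also right.
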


Note that the vertices of a clone set $C_i$ all have the same downset, so they are on the same level. Also, any copy of the $(2+2)$ poset must be contained in two adjacent levels, so any tangle $T_j$ must be contained in two adjacent levels. Thus, we can speak of the level of a clone set or the (adjacent) levels of a tangle.

By construction, the poset structure between two parts of $P$ is fairly restricted. If $C_i$ and $C_j$ are distinct clone sets, then $C_i$ is either completely above, completely below, or completely incomparable with $C_j$ (meaning that every vertex of $C_i$ has the same relationship with every vertex of $C_j$). If $C_i$ is a clone set and $T_j$ is a tangle, then $C_i$ can be
\begin{itemize}[noitemsep]
  \item completely above $T_j$;
  \item completely above the bottom of $T_j$ and incomparable with the top;
  \item completely below the top of $T_j$ and incomparable with the bottom;
  \item completely below $T_j$; or
  \item completely incomparable with $T_j$.
\end{itemize}
Similarly, there are only six possible ways for two tangles $T_i$ and $T_j$ to relate to each other. The following theorem shows how all of these relationships between different parts of $P$ can be put together.

\begin{theorem}\label{thm:listing}
  Let $P$ be a $(3+1)$-free poset, decomposed into clone sets $C_1, \ldots,\allowbreak C_r$ and tangles $T_1, \ldots,\allowbreak T_s$. Then, there exists a listing $(X_1, \ldots,\allowbreak X_{r+s})$ of the clone sets and the tangles of $P$ such that, for any two vertices $a \in X_i$ and $b \in X_j$ with $i \neq j$, we have $a < b$ exactly when
  \begin{enumerate}[(i),nosep]
    \item $\ell(a) \leq \ell(b) - 2$; or
    \item\label{itm:listing-order} $\ell(a) = \ell(b) - 1$ and $i < j$.
  \end{enumerate}
\end{theorem}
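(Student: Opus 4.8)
The plan is to construct the listing $(X_1, \ldots, X_{r+s})$ by first sorting the parts by their level, and then refining the order \emph{within} each level so that condition~\ref{itm:listing-order} correctly controls the covering relations between parts on adjacent levels. The starting point is \autoref{preimages}, which already gives a listing $(S_1, \ldots, S_k)$ of the co-connected components of the view poset $v(P)$ whose preimages $v^{-1}(S_i)$ are precisely the parts of $P$, and which is \emph{totally ordered} in the sense that $x < y$ in $v(P)$ whenever $x \in S_i$, $y \in S_{i+1}$. I would first argue that each part lies entirely on one level (for clone sets, recorded in the text via equal downsets; for tangles, via the observation that a $(2+2)$ lives in two adjacent levels), so that ``level of a part'' is well-defined, and that the listing from \autoref{preimages} is already compatible with level, i.e. the level of $X_i$ is weakly increasing in $i$.

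The main work is to verify that this listing satisfies the stated condition. The forward direction of condition~(i) is immediate from \autoref{prop:two-levels}: if $\ell(a) \le \ell(b)-2$ then $a < b$ regardless of the ordering, so parts more than one level apart are handled automatically. The delicate case is a pair of parts on \emph{adjacent} levels, $\ell(a) = \ell(b)-1$, where I must show $a < b$ holds for \emph{all} such pairs $(a,b)$ exactly when $X_i$ precedes $X_j$ in the listing. Here I would lean on the structural dichotomy recorded just before the theorem: between two distinct parts the relationship is ``uniform,'' so either every vertex of the lower part is below every vertex of the relevant sublevel of the upper part, or they are completely incomparable. This reduces the claim to showing that among parts on a fixed pair of adjacent levels, the ``is below'' relation is a total order consistent with the listing order. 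The cleanest way to see this is through the view poset again: for $a$ on level $\ell$ and $b$ on level $\ell+1$, whether $a < b$ is determined by whether $v(a) < v(b)$, and \autoref{prop:partial-converse} (the nesting condition $U_c \subseteq U_d$ or $D_c \subseteq D_d$ for same-level vertices, which propagates to adjacent-level comparability) forces these adjacent-level comparabilities to be nested and hence totally ordered. I would then check that the total order from \autoref{preimages} restricts correctly to this total order on each adjacent pair.

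The hardest step, and the one I would spend the most care on, is confirming that the \emph{single} listing coming from \autoref{preimages} simultaneously realizes the correct within-level order for \emph{every} adjacent pair of levels at once, rather than only for one pair in isolation. Concretely, I must rule out a ``transitivity clash'': a part $X_j$ on level $\ell+1$ that ought to come after some $X_i$ on level $\ell$ (because $a<b$) but before some $X_k$ also needing $X_k \prec X_j$, in a way incompatible with a linear order. The resolution should be that condition~\ref{item:rel4} of \autoref{lem:relations} (no vertex is simultaneously a top in one $(2+2)$ and a bottom in another via the same relations) together with the transitivity of $<$ in $P$ guarantees global consistency: the partial order induced on same-level parts by adjacent-level comparabilities is acyclic and extends the co-connected-component order, so the linear order supplied by \autoref{preimages} is already a valid simultaneous refinement. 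I would finish by verifying the converse direction---that if $X_i$ precedes $X_j$ on adjacent levels then indeed $a < b$ for all such $a,b$, which is exactly the ``uniform relationship'' statement applied to the established order---so that the biconditional ``exactly when'' holds.
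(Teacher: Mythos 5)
Your overall strategy---sort the parts by level first, then refine within each level---cannot work, because the listing demanded by \autoref{thm:listing} is \emph{not} weakly increasing in level. The condition is a biconditional: if $a \in X_i$ and $b \in X_j$ lie on adjacent levels with $\ell(a) = \ell(b)-1$ and $a \not< b$, then (ii) must \emph{fail}, which forces $j < i$, i.e.\ the part containing the \emph{higher}-level vertex must come \emph{earlier}. A three-vertex example already defeats the plan: take $P = \{a, a', b\}$ with the single relation $a' < b$. Every part is a singleton clone set, the only compatible listing is $(\{a'\}, \{b\}, \{a\})$, and the level-sorted listing $(\{a'\}, \{a\}, \{b\})$ coming from the view poset wrongly asserts $a < b$. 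The same phenomenon appears in the paper's own \autoref{small example}, where $\{h\}$ (level 3) precedes the tangle (levels 1--2) and $\{j\}$ (level 4) precedes $\{i\}$ (level 3). For the same reason your claim that adjacent-level comparability ``is determined by whether $v(a) < v(b)$'' is false: in the three-vertex example $v(a) < v(b)$ yet $a \not< b$.

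A second, independent gap: \autoref{preimages} does not give a listing of the \emph{parts}. Its preimages are the tops of tangles, the bottoms of tangles, and the clone sets as separate items, and a tangle's top and bottom need not even be adjacent in that listing (in \autoref{example poset} the bottom $\{b, c\}$ and the top $\{f, g\}$ are separated by the clone set $\{d, e\}$). The real content of the theorem is to merge each matched top--bottom pair into a single item and show that the merged tangle admits one position simultaneously consistent with the constraints coming from its bottom (against parts one level up) and from its top (against parts one level above and one level below). This is precisely the ``interleaving'' step in the paper's argument: extract from the view poset a partial listing of the parts meeting each level $L_i$, show that the listings for $L_i$ and $L_{i+1}$ interleave in a unique way respecting condition (ii), and then reconcile all levels into one listing. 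None of this appears in your proposal; your appeal to \autoref{prop:two-levels} for case (i) and to the ``uniform relationship'' between parts is fine, but the listing you actually construct is not a listing of clone sets and tangles, and even after merging it would order the parts incorrectly.
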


\begin{definition}
  A listing which satisfies the conditions of \autoref{thm:listing} is called a \emph{compatible} listing.
\end{definition}

\begin{example}
  A compatible listing for the poset in \autoref{example poset} is ${\big (}\{a\},\allowbreak \{d, e\},\allowbreak \{h\},\allowbreak (\{f, g\},\allowbreak \{b, c\}),\allowbreak \{j\},\allowbreak \{i\}{\big)}$, as shown in \autoref{small example}.
\end{example}

\begin{proof}[idea for \autoref{thm:listing}]
For each level, we can get a partial listing of the parts which intersect $L_i$ according to their positions on the view poset $v(P)$.
Then, the listing for $L_i$ and $L_{i+1}$ can be interleaved in a unique way to respect condition~\ref*{itm:listing-order}, so it follows that all of them can be reconciled into a single compatible listing.
\end{proof}

Note that the listing $(X_1, X_2, \ldots, X_{r+s})$ from
\autoref{thm:listing} is not unique in general. In particular, if
$(\ldots, X_i, X_{i+1}, \ldots)$ is a compatible listing, then the
listing $(\ldots, X_{i+1}, X_i, \ldots)$ obtained by swapping the parts $X_i$
and $X_{i+1}$ is compatible exactly when $X_i$ and $X_{i+1}$ contain
no vertices on the same or on adjacent levels of $P$.
We call such a swap \emph{valid}.
 
\begin{example}
In \autoref{small example} we can swap the clone set $\{j\}$ on level 4 with the
tangle $(\{f, g\}, \{b, c\})$ on levels 1 and 2 to obtain another compatible listing for the poset.
\end{example}

Therefore the natural setting for compatible listings is that of free partially commuting monoids~\cite{CF}, also known as trace monoids~\cite{VCT}.

\begin{definition}
  Let $\Sigma$ be the countable alphabet
  \[
    \Sigma = \{c_1, c_2, \ldots, c_i, \ldots\} \cup \{t_{12}, t_{23}, \ldots, t_{i\,i+1}, \ldots\},
  \]
  let $\Sigma^*$ be the free monoid generated by $\Sigma$, and let $M$ be the free partially commuting monoid with commutation relations
  \begin{alignat*}{2}
    c_i c_j &= c_j c_i, &\qquad&\text{if $\abs{i - j} \geq 2$}, \\
    c_i t_{j\,j+1} &= t_{j\,j+1} c_i, &&\text{if $i \leq j - 2$ or $i \geq j + 3$}, \\
    t_{i\,i+1} t_{j\,j+1} &= t_{j\,j+1} t_{i\,i+1}, &&\text{if $\abs{i - j} \geq 3$}.
  \end{alignat*}
\end{definition}

\begin{definition}
  If $P$ is a $(3+1)$-free poset, then for each compatible listing $(X_1, X_2, \ldots, X_{r+s})$ of its clone sets and tangles, we can obtain a word in $\Sigma^*$ by replacing each clone set at level $i$ by the letter $c_i$ and each tangle straddling levels $\{i, i+1\}$ by the letter $t_{i\,i+1}$.
It can be seen that any two compatible listings for $P$ are related by a sequence of valid swaps, so the set of these words is an equivalence class under the commutation relations for $M$ (see, \textit{e.g.}, \cite[Chapter~1]{VCT}), and the corresponding element of $M$ is called the \emph{skeleton} of $P$.
\end{definition}

\begin{example}\label{ex:skeleta}
  The two representatives in $\Sigma^*$ for the skeleton of the poset in \autoref{small example} are $c_1 c_2 c_3 t_{12} c_4 c_3$ and $c_1 c_2 c_3 c_4 t_{12} c_3$.
\end{example}

The point of a skeleton is that it exactly captures the relationships between different parts of $P$. More precisely, two posets with the same skeleton and isomorphic parts are themselves isomorphic; conversely, given a skeleton, any set of parts (with the right number of clone sets and tangles) can be plugged into the skeleton. Together, \autoref{cor:data}, \autoref{thm:skel-char}, and \autoref{thm:skel-char-lex} below show this and give a characterization of the elements of $M$ which are skeleta.

\begin{corollary}\label{cor:data}
  Let $P$ be a $(3+1)$-free poset. Then, $P$ is uniquely determined (up to isomorphism) by its skeleton together with, for each letter $c_i$ or $t_{i\,i+1}$ of the skeleton, the cardinality of the corresponding clone set or the isomorphism class of the corresponding tangle.
\end{corollary}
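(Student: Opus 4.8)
The plan is to prove the corollary by showing that any two $(3+1)$-free posets $P$ and $P'$ that share a skeleton $m \in M$ and have isomorphic corresponding parts must themselves be isomorphic. First I would nail down the bridge between the combinatorics of valid swaps and the algebra of $M$: a valid swap of two adjacent parts in a compatible listing is possible exactly when the parts occupy levels that are neither equal nor adjacent, and a case check shows this condition matches the defining commutation relations of $M$ verbatim. For instance, two clone sets at levels $i,j$ swap iff $\abs{i-j}\geq 2$, a clone set at level $i$ swaps past a tangle on levels $\{j,j+1\}$ iff $i\leq j-2$ or $i\geq j+3$, and two tangles on levels $\{i,i+1\}$ and $\{j,j+1\}$ swap iff $\abs{i-j}\geq 3$. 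Since any two compatible listings of a fixed poset differ by a sequence of valid swaps, it follows that the representative words of a given skeleton are \emph{precisely} the words coming from compatible listings, and that each letter-occurrence of the skeleton picks out a well-defined part independently of the chosen representative.

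Next, given $P$ and $P'$ with the same skeleton $m$ and isomorphic corresponding parts, I would fix a single representative word $w$ of $m$, whose $k$th letter corresponds to the $k$th part $X_k$ in the associated compatible listing. By the previous paragraph, $w$ records a compatible listing of \emph{both} $P$ and $P'$ simultaneously. The $k$th letter, being either $c_i$ or $t_{i\,i+1}$, reads off the level or pair of levels of the $k$th part; hence the level $\ell(v)$ of every vertex $v$ is forced by the skeleton alone, since a clone set $c_i$ contributes vertices all at level $i$ while a tangle $t_{i\,i+1}$ contributes bottom vertices at level $i$ and top vertices at level $i+1$. For each $k$, the hypothesis supplies a poset isomorphism $\phi_k$ between the $k$th part of $P$ and the $k$th part of $P'$; for tangles this is a colour-preserving isomorphism, so in every case $\phi_k$ preserves levels. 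Assembling these over all parts yields a level-preserving bijection $\phi \colon P \to P'$.

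It then remains to verify that $\phi$ is an order isomorphism. For two vertices $a,b$ in the same part, the relation $a<b$ is preserved because $\phi_k$ is an isomorphism of that part. For $a \in X_i$ and $b \in X_j$ in distinct parts, \autoref{thm:listing} characterizes $a<b$ purely in terms of $\ell(a)$, $\ell(b)$, and the listing positions $i,j$; since $\phi$ preserves levels and the image vertices sit in the same positions $i,j$ of the same listing $w$ for $P'$, the pair $\phi(a),\phi(b)$ satisfies the identical numerical condition. Thus $a<b \iff \phi(a)<\phi(b)$, and applying the same reasoning with the roles of $a$ and $b$ reversed handles the opposite relation as well. Hence $\phi$ is a poset isomorphism, giving $P \cong P'$.

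The main obstacle is the bookkeeping in the first step: confirming that the equivalence of words under the commutation relations of $M$ coincides \emph{exactly} with the equivalence of compatible listings under valid swaps, so that ``the letter $c_i$ or $t_{i\,i+1}$ of the skeleton'' is attached to a genuine part of $P$ in a way stable across all representative words. Once this identification is secure, the reconstruction is completely forced: within-part relations are dictated by the part data (cardinality or tangle isomorphism class), and cross-part relations are dictated by \autoref{thm:listing}, leaving no residual freedom.
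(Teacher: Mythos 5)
Your proposal is correct and follows the route the paper intends: the extended abstract states \autoref{cor:data} without an explicit proof, as a consequence of \autoref{thm:listing} and the definition of the skeleton, and you supply exactly the missing details\hspace{0.08em}---\hspace{0.08em}matching valid swaps with the commutation relations of $M$ (so that each letter occurrence names a well-defined part, which works because identical letters never commute), assembling the level-preserving part isomorphisms, and recovering all cross-part relations from the numerical criterion of \autoref{thm:listing}. No gaps.
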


\begin{theorem}\label{thm:skel-char}
  Let $m$ be an element of the monoid $M$. Then, $m$ is the skeleton of some $(3+1)$-free poset if and only if
  \begin{enumerate}[(i),nosep]
    \item\label{itm:staircase}
      every representative $w \in \Sigma^*$ for $m$ starts with the letter $c_1$ or $t_{12}$; and
    
    \item\label{itm:no-repeat}
      no representative $w \in \Sigma^*$ for $m$ contains a factor of the form $c_i c_i$, $i \geq 1$.
  \end{enumerate}
\end{theorem}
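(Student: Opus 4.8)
The plan is to prove the two implications separately, with the forward direction (necessity) being a direct unpacking of the tangle decomposition and the reverse direction (sufficiency) carried out by an explicit construction using \autoref{thm:listing} and \autoref{cor:data}.

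For necessity, suppose $m$ is the skeleton of a $(3+1)$-free poset $P$ and fix a compatible listing $(X_1, \ldots, X_{r+s})$ realizing a representative $w$. For condition~\ref*{itm:staircase}, I would argue that the first letter of any representative corresponds to a part that is minimal in the partial order underlying the trace, and hence consists of minimal vertices of $P$; since the minimal vertices form the level $L_1$, such a part is either a clone set on level $1$ (giving $c_1$) or a tangle straddling levels $1$ and $2$ (giving $t_{12}$). The key inputs here are that $L_1 \neq \emptyset$ and that no part lying entirely above $L_1$ can be made to precede every level-$1$ part. For condition~\ref*{itm:no-repeat}, I would show that two \emph{distinct} clone sets $C, C'$ on the same level $i$ are never adjacent. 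Their views are comparable, say $v(C) < v(C')$, so that either $D_C \subseteq D_{C'}$ or $U_C \supseteq U_{C'}$ is a strict inclusion. In the first case, pick $z \in D_{C'} \setminus D_C$; by \autoref{prop:two-levels} its level must be exactly $i-1$, since a lower level would force $z < C$ as well. Then $z$ lies below $C'$ but is incomparable to $C$, so whichever part contains $z$ is forced strictly between $C$ and $C'$ in the listing; as that part touches level $i-1$ it is dependent with $c_i$, so the two occurrences of $c_i$ are separated by a dependent letter in the trace and can never be made adjacent. The case of a strict upset inclusion is symmetric, using a level-$(i+1)$ part.

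For sufficiency, given $m$ satisfying \ref*{itm:staircase} and \ref*{itm:no-repeat}, I would fix a representative $w = x_1 \cdots x_n$ and build a poset $P$ by the recipe of \autoref{cor:data}: replace each $c_i$ by a single vertex of \emph{intended level} $i$, each $t_{i\,i+1}$ by a fixed tangle on intended levels $i, i+1$, and declare $a < b$ for $a \in X_p$, $b \in X_q$ with $p \neq q$ exactly when the intended levels differ by at least $2$, or differ by exactly $1$ and $p < q$ (keeping the internal order of each tangle). The steps to verify are: (a) $P$ is a poset; (b) the actual level $\ell$ of each vertex equals its intended level; (c) $P$ is $(3+1)$-free; (d) the tangle decomposition of $P$ returns exactly the plugged-in parts; and (e) the skeleton of $P$ is $m$. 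Step (c) I expect to follow from \autoref{prop:partial-converse}: the level condition is built in, and same-level vertices either lie in a common clone set (equal views) or share an upset or a downset because of the uniform way in which distinct parts relate (for instance, all tops of a tangle have the same upset in $P$). The two hypotheses enter precisely here: condition~\ref*{itm:staircase} is what makes step~(b) work, guaranteeing that the staircase is grounded at level $1$ so that no vertex ends up at an actual level below its intended one; and condition~\ref*{itm:no-repeat} is what makes step~(d) faithful, since two adjacent clone sets of the same intended level would have equal downsets and upsets, hence equal views, and would silently merge into one clone set---collapsing $c_i c_i$ to $c_i$ and changing the skeleton.

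The main obstacle I anticipate is step~(b), the equality of actual and intended levels, which is where condition~\ref*{itm:staircase} must be leveraged globally rather than only at the first letter. The natural approach is induction on the intended level: level-$1$ vertices are mutually incomparable with nothing below them, hence minimal, while for a vertex $a$ of intended level $i \geq 2$ one must exhibit a part of intended level $i-1$ preceding $a$'s part in the listing, so that $a$ sits atop a saturated chain of actual levels $1, 2, \ldots, i-1$. The crux is to translate ``every representative starts with $c_1$ or $t_{12}$'' into the statement ``every part of intended level $i \geq 2$ is preceded by a part of intended level $i-1$''; I would establish this by a peeling argument, repeatedly removing a minimal (necessarily level-$1$) part and checking that the residual element of $M$ still satisfies~\ref*{itm:staircase}, so that the induction descends cleanly. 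Once (b) is in hand, the remaining items are bookkeeping: in particular, (e) follows because the compatible listings of the constructed poset $P$ are exactly the representatives of $m$.
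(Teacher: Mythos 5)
The paper itself contains no proof of this theorem (it is an extended abstract; only the interpretive remark after the statement is given), so your argument can only be measured against what the statement requires. Your necessity direction is essentially right: for condition (ii), producing a witness $z$ of the strict inclusion of downsets (or upsets), locating it on level $i-1$ (or $i+1$) via \autoref{prop:two-levels}, and observing that its part is dependent with $c_i$ and wedged between the two clone sets in the dependency order of the trace, is exactly the needed argument; for condition (i), the point is that a part whose lowest level is $i \geq 2$ contains a vertex that, by the definition of levels, lies above some level-$(i-1)$ vertex, whose part must then precede it in every compatible listing. The architecture of the sufficiency direction (build $P$ from a representative by the recipe of \autoref{thm:listing} and \autoref{cor:data}, then verify (a)--(e), with $(3+1)$-freeness coming from \autoref{prop:partial-converse}) is also the right one, and you correctly isolate where each hypothesis is used.

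The genuine gap is at the step you yourself flag as the crux. You need: \emph{if $m$ satisfies (i), then every occurrence whose lowest level is $i \geq 2$ is preceded, in the dependency order, by an occurrence touching level $i-1$.} Your proposed peeling argument does not establish this, because removing a minimal (level-$1$) letter from $m$ does \emph{not} preserve condition (i): already $m = c_1 c_2$ satisfies (i), while the residual $c_2$ has a representative beginning with $c_2$. So the induction does not descend, and without this lemma step (b) (actual level $=$ intended level) collapses --- a vertex of intended level $i$ with nothing of level $i-1$ below it lands at actual level $i-1$, and the constructed poset does not have skeleton $m$. A correct argument runs through the commutation relations themselves: two letters of $\Sigma$ are dependent exactly when their level ranges (namely $\{i\}$ for $c_i$ and $\{i,i+1\}$ for $t_{i\,i+1}$) are at distance at most $1$, so every letter with highest level at most $i-2$ commutes with every letter with lowest level at least $i$. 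If no letter touching level $i-1$ preceded $x_p$ in the dependency order, then the dependency down-set of $x_p$ would split into two elementwise-commuting classes (highest level $\leq i-2$ versus lowest level $\geq i$); by condition (i) the minimal letters of that down-set are $c_1$ or $t_{12}$, which either touch level $i-1$ (settling the cases $i=2$, and $i=3$ with a $t_{12}$) or lie in the first class, while $x_p$ lies in the second, so a saturated dependency chain from a minimal letter up to $x_p$ would have to contain a consecutive --- hence dependent --- pair straddling the two commuting classes, a contradiction. With that lemma in place, the rest of your outline (the merging of adjacent same-level clone sets explaining condition (ii), and the verification via \autoref{prop:partial-converse}) goes through as routine checking.
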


Note that condition~\ref*{itm:staircase} of \autoref{thm:skel-char} corresponds to the requirement that every vertex of $P$ on level $L_{i + 1}$ be greater than some vertex on the previous level $L_i$, while condition~\ref*{itm:no-repeat} forbids pairs of clone sets that could be merged into a single clone set.

\begin{figure}[!b]
\centering
\fbox{\begin{minipage}{.95\linewidth}
Consider the $26$-vertex $(3+1)$-free poset $P$ with $10$ parts shown in the compatible listing below. Only some of the comparability and incomparability relations between parts are drawn, but the others can be determined from \autoref{thm:listing}.
\[\includegraphics[scale=.75]{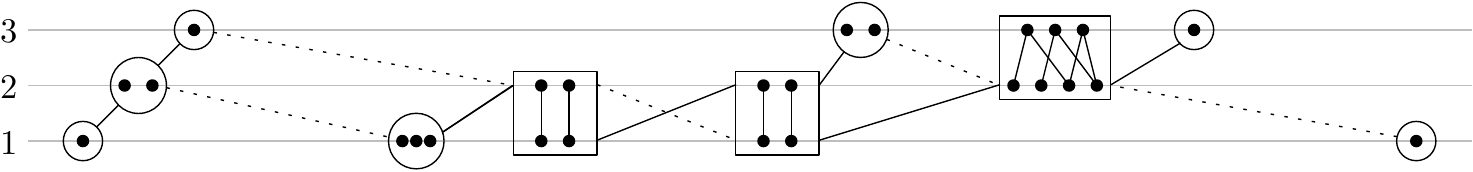}\]
The word $w_0 = c_1 c_2 c_3 c_1 t_{12} t_{12} c_3 t_{23} c_3 c_1$, shown below in a suggestive manner, is the lexicographically maximal representative for the skeleton of $P$.
\[\includegraphics[scale=.75]{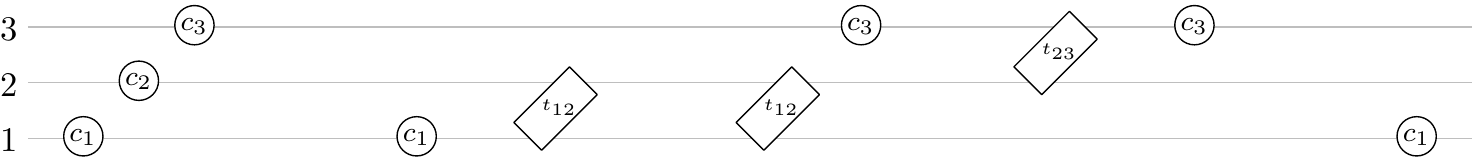}\]
The decorated Dyck path associated with $w_0$ is the following.
\[\includegraphics[scale=.75]{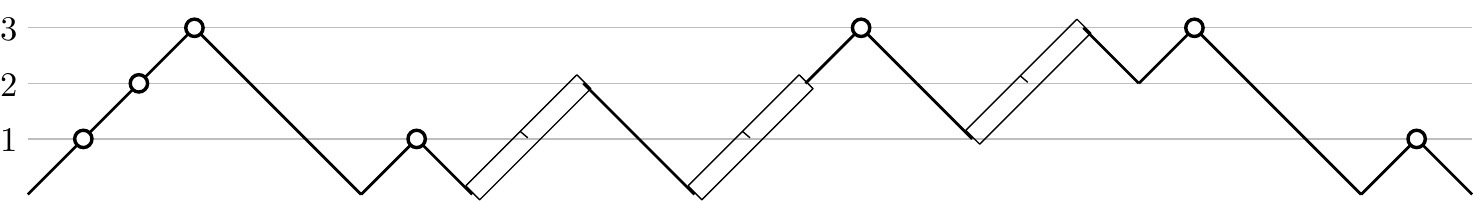}\]
\end{minipage}}
\caption{An example of the bijection given in \autoref{thm:skeleta-bijection}.}
\label{fig:dyck}
\end{figure}

\begin{theorem}\label{thm:skel-char-lex}
  Let $m$ be an element of the monoid $M$. Then, there exists a representative $w_0 \in \Sigma^*$ for $m$ for which every pair of consecutive letters is either
  \begin{alignat*}{2}
    & c_i c_j &\qquad&\text{for $i \geq j - 1$; or} \\
    & c_i t_{j\,j+1} &&\text{for $i \geq j - 1$; or} \\
    & t_{i\,i+1} c_j &&\text{for $i \geq j - 2$; or} \\
    & t_{i\,i+1} t_{j\,j+1} &&\text{for $i \geq j - 2$.}
  \end{alignat*}
  Furthermore,
  \begin{enumerate}[(i),nosep]
    \item
      this representative $w_0$ is unique and is the lexicographically maximal representative for $m$ with respect to the total order $\{c_1 < t_{12} < c_2 < t_{23} < \cdots\}$ on $\Sigma$;
    
    \item
      if $w_0$ starts with $c_1$ or $t_{12}$, then every representative $w \in \Sigma^*$ for $m$ starts with $c_1$ or $t_{12}$; and
    
    \item
      if $w_0$ does not contain a factor of the form $c_i c_i$, $i \geq 1$, then no representative $w \in \Sigma^*$ for $m$ contains a factor of this form.
  \end{enumerate}
\end{theorem}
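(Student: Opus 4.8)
The plan is to recast the four permitted shapes of consecutive letters as a single uniform condition, reduce assertion~(i) to the standard theory of lexicographic normal forms in trace monoids, and then read off (ii) and (iii) from the lex-maximality of $w_0$. I would first assign to each letter a \emph{rank} by $r(c_i) = 2i-1$ and $r(t_{i\,i+1}) = 2i$, so that the given total order $c_1 < t_{12} < c_2 < \cdots$ is exactly the order of ranks. A direct inspection of the three families of commutation relations then shows that two distinct letters $x,y$ are independent (commute) precisely when $|r(x)-r(y)| \ge 5$, or $|r(x)-r(y)| = 4$ and $x,y$ are both of type $c$, and are dependent precisely when $|r(x)-r(y)| \le 3$, or $|r(x)-r(y)| = 4$ and $x,y$ are both of type $t$. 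With this in hand I would check, over the four type-combinations, that a consecutive pair $xy$ has one of the four permitted shapes if and only if it is \emph{not} the case that $x < y$ and $x,y$ commute. Thus a word has all consecutive pairs of permitted shape iff it admits no adjacent transposition of two commuting letters that increases it lexicographically.

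\textbf{Local implies global (the crux).} Every element of $M$ has finitely many representatives, hence a unique lexicographically maximal one, which I call $w_0$; clearly $w_0$ admits no increasing adjacent commuting transposition, so all its consecutive pairs are of permitted shape. The heart of the matter is the converse, that the \emph{local} absence of such transpositions forces \emph{global} lex-maximality. Since this fails in a general trace monoid, it must use the rank structure. I would argue by contradiction: if $w = a_1\cdots a_n$ has no increasing adjacent commuting transposition but is not lex-maximal, then comparing $w$ with the true lex-maximal representative at their first point of difference yields, by the standard description of the initial letters of a trace, a ``liftable'' increasing pair, namely indices $i<j$ with $a_i < a_j$ and $a_j$ commuting with each of $a_i,\dots,a_{j-1}$; I choose one with $j-i$ minimal. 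The gap $j-i=1$ would itself be an increasing adjacent commuting pair, so $j-i\ge 2$, and minimality applied to $(i+1,j)$ forces $a_{i+1}\ge a_j$; then $(a_i,a_{i+1})$ is increasing, so by hypothesis $a_i,a_{i+1}$ are dependent. Translating to ranks, $r(a_{i+1}) \ge r(a_j) \ge r(a_i)+4$ (as $a_i,a_j$ are independent with $a_i<a_j$), while dependence of $a_i,a_{i+1}$ at rank-distance $\ge 4$ forces distance exactly $4$ with both of type $t$; but independence of $a_i,a_j$ at rank-distance exactly $4$ forces both of type $c$, a contradiction since $a_i$ cannot be of both types. \textbf{This rank computation, exploiting the single exception at distance $4$, is the step I expect to be the main obstacle}, as it is precisely where the particular relations of $M$ enter.

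\textbf{Conclusion of (i) and proof of (ii), (iii).} Having shown that the permitted-shape condition characterizes the unique lex-maximal representative, existence, uniqueness, and (i) follow at once. For (ii) I would use that the possible first letters of representatives are exactly the minimal letters in the dependence order of the trace, and that $w_0$, being lex-maximal, begins with the \emph{largest} such minimal letter; if this is $c_1$ or $t_{12}$ then, since these are the two smallest letters of $\Sigma$, every minimal letter lies in $\{c_1,t_{12}\}$, whence every representative begins with $c_1$ or $t_{12}$. For (iii) I would argue contrapositively: if some representative contains a factor $c_ic_i$, the two occurrences of $c_i$ involved are consecutive in the dependence order with nothing strictly between them. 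In $w_0$, any letter lying strictly between these two occurrences would be dependent on $c_i$ and hence strictly between them in the dependence order, which is impossible, so every intervening letter is independent of $c_i$. If the letter $z_1$ immediately following the first $c_i$ satisfied $z_1 > c_i$, the pair $c_i z_1$ would be an increasing commuting adjacent pair; and if $z_1 < c_i$, sliding the second $c_i$ (which commutes with all the intervening independent letters) leftward to sit just after the first would produce a lexicographically larger representative. Both contradict lex-maximality of $w_0$, so the two occurrences are already adjacent in $w_0$, giving a factor $c_ic_i$ in $w_0$ and completing (iii).
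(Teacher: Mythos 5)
Your proposal is correct, but there is nothing in the extended abstract to compare it against: the paper states \autoref{thm:skel-char-lex} without proof, deferring all details to the full version \cite{GPMR}. Judged on its own terms, your argument holds up. The rank function $r(c_i)=2i-1$, $r(t_{i\,i+1})=2i$ does match the stated total order, and your translation of the three commutation families into rank distances is accurate (independence iff $\abs{\Delta r}\ge 5$, or $\abs{\Delta r}=4$ with both letters of type $c$); I checked all four type-combinations of the permitted shapes against the condition ``not ($x<y$ and $x,y$ commute),'' and they agree. You are also right to flag the local-to-global step as the crux --- it genuinely fails in an arbitrary trace monoid (e.g.\ three letters $x<y<z$ with $y$ independent of both $x$ and $z$ but $x,z$ dependent: $xzy$ is locally unimprovable yet equals $yxz$) --- and your resolution is sound: taking a liftable pair $(i,j)$ of minimal gap forces $a_{i+1}>a_j>a_i$ with $a_i,a_{i+1}$ dependent at rank distance exactly $4$ (hence both of type $t$) while $a_i,a_j$ are independent at rank distance exactly $4$ (hence both of type $c$), which is the desired contradiction. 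Parts (ii) and (iii) follow correctly from the lex-maximality established in (i); in (iii) the one phrase worth tightening is ``any letter lying strictly between these two occurrences would be dependent on $c_i$,'' which should read as a conditional (any such letter, \emph{if} dependent on $c_i$, would lie strictly between the two occurrences in the dependence order, which is impossible), as your subsequent sliding argument indeed assumes.
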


\begin{example}
  Of the two representatives given in \autoref{ex:skeleta}, $c_1 c_2 c_3 c_4 t_{12} c_3$ is lexicographically maximal.
\end{example}

Using this characterization of skeleta, we can enumerate them, and this will allow us to obtain generating functions for $(3+1)$-free posets.

\begin{theorem}\label{thm:skeleta-bijection}
  There is a bijection between skeleta of $(3+1)$-free posets and certain decorated Dyck paths. (See \autoref{fig:dyck} for an example.)
\end{theorem}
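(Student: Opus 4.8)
\section*{Proof proposal for \autoref{thm:skeleta-bijection}}

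The plan is to build the bijection on top of the canonical representatives supplied by \autoref{thm:skel-char-lex}, which already reduce each skeleton $m$ to a single word. First I would isolate the one structural inequality that drives everything. Writing the lexicographically maximal representative as $w_0 = \ell_1 \ell_2 \cdots \ell_n$, assign to each letter a \emph{bottom level} and a \emph{top level} (for $c_i$ both equal $i$; for $t_{i\,i+1}$ the bottom is $i$ and the top is $i+1$). A short case check shows that the four admissible consecutive-letter patterns of \autoref{thm:skel-char-lex} are together equivalent to the uniform statement that the bottom level of $\ell_{k+1}$ is at most one more than the top level of $\ell_k$. Combined with condition~\ref*{itm:staircase} of \autoref{thm:skel-char}, which forces $\ell_1$ to have bottom level $1$, this says precisely that the level profile of $w_0$ never rises faster than a lattice path is allowed to, which is exactly the step-and-nonnegativity condition I will need.

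Next I would define the forward map by turning this level profile into a decorated Dyck path. Reading $w_0$ from left to right and interpreting the (suitably normalized) levels as heights, I emit between consecutive letters the single up-step when the level rises and the requisite down-steps when it falls, and I let each letter contribute its own portion of the path: a clone set behaves like a node that stays on its level, while a tangle occupies the two levels it straddles. The inequality above guarantees that the result is a genuine Dyck path, with a normalization under which returns to the $x$-axis record the lowest-level parts. I then decorate the path by marking the steps coming from $t$-letters, so that clone sets and tangles are distinguishable. As a sanity check that the target objects are the right ones, I would verify that a clone set and a tangle contribute, respectively, the quadratic and the cubic term of~\eqref{recurrTandC}, so that the admissible decorated paths are the binary/ternary structures enumerated by $S(c,t)$; the example in \autoref{fig:dyck} is the template.

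To finish I would prove bijectivity by exhibiting the inverse and pinning down the image. Given a decorated Dyck path of the admissible shape, one reads off the height sequence and the tangle marks, recovers the level of each part, and reconstructs the unique word $w_0$, which by \autoref{thm:skel-char-lex} names a unique element of $M$. Surjectivity requires describing exactly which decorated Dyck paths occur, and here the two defining conditions for skeleta translate into local path conditions: condition~\ref*{itm:staircase} of \autoref{thm:skel-char} becomes a condition at the left end (the path first enters level $1$), and condition~\ref*{itm:no-repeat}, the forbidden factor $c_i c_i$, becomes a prohibition on two consecutive undecorated steps at the same height. The point that lets me test the latter on $w_0$ alone, rather than on every representative of $m$, is the final part of \autoref{thm:skel-char-lex}, which certifies that the canonical word already witnesses both conditions.

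The step I expect to be the main obstacle is making the forward map and its image description simultaneously precise. I must fix the normalization so that ``bottom level $\le$ previous top level $+1$'' corresponds \emph{exactly} to the Dyck condition, with no spurious paths and no collisions, and so that the tangle marks carry just enough information to invert the map. The delicate part is translating the trace-monoid condition~\ref*{itm:no-repeat}---a statement quantified over all representatives of $m$---into a single clean forbidden pattern on the decorated path and then proving that this pattern, together with the left-end condition, carves out \emph{precisely} the image; the count produced by~\eqref{recurrTandC} serves as an independent check that the admissible paths have been characterized correctly.
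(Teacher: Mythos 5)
Your proposal is correct and follows essentially the same route as the paper's proof: both take the lexicographically maximal representative $w_0$ from \autoref{thm:skel-char-lex}, send each $c_i$ to an up-step ending at height $i$ and each $t_{i\,i+1}$ to a decorated double up-step ending at height $i+1$ (inserting down-steps as needed, which the consecutive-letter inequalities guarantee is always possible), and translate conditions \ref*{itm:staircase} and \ref*{itm:no-repeat} of \autoref{thm:skel-char} into the left-end condition and the forbidden undecorated up-down-up pattern. Your write-up is somewhat more explicit than the paper's terse proof about why the canonical word alone suffices to test the conditions, but the construction and its inverse are the same.
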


\begin{proof}
  Given the lexicographically maximal representative $w_0$ for a skeleton, we can obtain a decorated Dyck path that starts at $(0,0)$, ends at $(2n,0)$ for some $n \geq 0$, and never goes below the $x$-axis as follows: replace each letter $c_i$ by a $(1, 1)$ step ending at height $i$, each letter $t_{i\,i+1}$ by a $(2, 2)$ step ending at height $i+1$, and add $(1,-1)$ down steps as necessary.
  We call the result \emph{decorated} since a $(2, 2)$ step can be seen as a pair of consecutive decorated $(1, 1)$ steps.
Since $w_0$ not contain $c_i c_i$ as a factor, the decorated Dyck path obtained from $w_0$ contains no sequence $(1, 1), (1, -1), (1, 1)$ of consecutive undecorated steps (up-down-up).
Conversely, every decorated Dyck path avoiding this sequence can be obtained from a skeleton.
\end{proof}

\begin{theorem}\label{thm:skeleta-genfunc}
  Let $S(c, t) \in \QQ[[c, t]]$ be the ordinary generating function for skeleta with respect to the number of clone sets and the number of tangles, that is, the formal power series
  \[
    S(c, t) = \sum_{r, s \geq 0} \left(\text{\# of distinct skeleta with $r$ clone sets and $s$ tangles}\right) c^r t^s.
  \]
  Then, $S(c, t)$ is uniquely determined by the equation
  \begin{equation}\label{eq:RecSkel}
    S(c, t) = 1 + \frac{c}{1 + c} S(c, t)^2 + t S(c, t)^3.
  \end{equation}
\end{theorem}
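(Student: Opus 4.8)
The plan is to work entirely in the decorated Dyck path model supplied by \autoref{thm:skeleta-bijection} and run a first-return decomposition there. Under that bijection, a skeleton with $r$ clone sets and $s$ tangles becomes a decorated Dyck path in which each clone set is a single undecorated up step $(1,1)$ carrying weight $c$, each tangle is a decorated pair of consecutive up steps (a $(2,2)$ step) carrying weight $t$, the down steps are forced, and the only global constraint is that the path avoids the factor $(1,1),(1,-1),(1,1)$ in which both up steps are undecorated. Thus $S(c,t)$ is exactly the ordinary generating function for such \emph{good} paths, weighted by $c$ per undecorated up step and $t$ per decorated double step, and it suffices to derive \eqref{eq:RecSkel} for this series.

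A nonempty good path begins with an up step reaching height $1$, which is either the undecorated step of a clone set or the first (decorated) step of a tangle, so I would split $S = 1 + K + G$, where $K$ counts the good paths starting with a clone and $G$ those starting with a tangle. For $G$, I write the path as the tangle step $t_{12}$ followed by its continuation from height $2$, and cut that continuation first at its first return to height $1$ and then at its first return to height $0$. This exhibits it as an ordered triple of independent good paths, each reached just after a down step and each allowed to be empty, so $G = tS^3$. The point to check is that no forbidden factor is created at the two return steps: at each such step the preceding step is either a down step (when the intervening subpath is nonempty) or the decorated second up step of the tangle, so any straddling $(1,1),(1,-1),(1,1)$ window fails to have both up steps undecorated. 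Hence the tangle term is genuinely unconstrained and equals $tS^3$.

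The delicate term is $K$, and this is where the factor $\frac{c}{1+c}$ must appear. Writing a clone-started path as $c_1$ followed by its continuation and cutting at the first return to height $0$ produces the initial clone step, an elevated good path $A$ (possibly empty), a forced down step, and a trailing good path $C$. A short case analysis of the three-step window at this return shows that the forbidden factor can arise in exactly one situation: when $A$ is empty (so the first arch is the trivial block $c_1,(1,-1)$) and $C$ itself begins with a clone. Letting $S-K$ stand for the good paths that do \emph{not} start with a clone\hspace{0.08em}---\hspace{0.08em}equivalently, the empty path together with the tangle-started paths, so that $S-K = 1 + tS^3$\hspace{0.08em}---\hspace{0.08em}this bookkeeping gives $K = c\big[(S-1)S + (S-K)\big]$. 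Solving yields $K(1+c) = cS^2$, that is $K = \frac{c}{1+c}S^2$, and substituting into $S = 1 + K + G$ produces \eqref{eq:RecSkel}.

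Finally, uniqueness of the solution in $\QQ[[c,t]]$ is routine: the constant term is forced to be $1$, and comparing coefficients of $c^r t^s$ in \eqref{eq:RecSkel} shows that each coefficient with $r+s \geq 1$ is determined by strictly lower-order coefficients. The main obstacle is the junction analysis in the clone case: one must verify carefully that the only way the forbidden up-down-up factor can straddle a return to the base level is the trivial-arch-then-clone situation above, because it is precisely this single excluded case that converts the naive $cS^2$ into $\frac{c}{1+c}S^2$.
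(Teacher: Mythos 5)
Your proof is correct and follows essentially the same route as the paper: the paper's argument is precisely the first-return decomposition of the decorated Dyck paths from \autoref{thm:skeleta-bijection} into the empty path, clone-started paths, and tangle-started paths (as sketched in \autoref{figDecomp}), with the clone-started class obtained by subtracting the single forbidden junction configuration, which yields $S_1(1+c)=cS^2$ exactly as in your bookkeeping. Your write-up just makes explicit the junction case analysis that the paper leaves to the figure.
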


\begin{proof}[idea]
See \autoref{figDecomp}.
\end{proof}

\begin{figure}[!h]
\begin{center}
\begin{align*}
  \left\{\vcenter{\hbox{\includegraphics[scale=0.65]{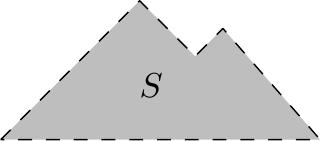}}}\right\}
    &= \{\varepsilon\}
      \cup \left\{\vcenter{\hbox{\includegraphics[scale=0.65]{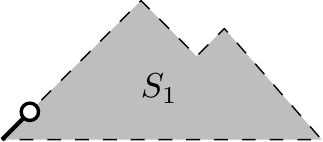}}}\right\}
      \cup \left\{\vcenter{\hbox{\includegraphics[scale=0.65]{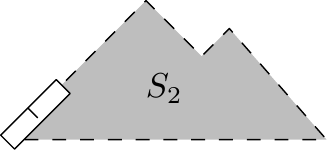}}}\right\} \\
  \left\{\vcenter{\hbox{\includegraphics[scale=0.65]{decomp2}}}\right\}
    &= \left\{\vcenter{\hbox{\includegraphics[scale=0.65]{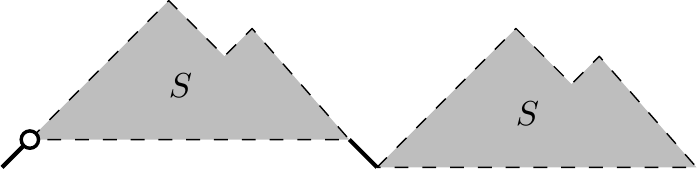}}}\right\}
      \setminus \left\{\vcenter{\hbox{\includegraphics[scale=0.65]{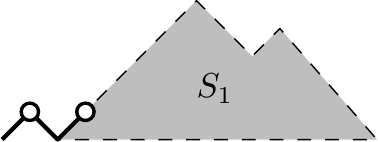}}}\right\} \\
  \left\{\vcenter{\hbox{\includegraphics[scale=0.65]{decomp3}}}\right\}
    &= \left\{\vcenter{\hbox{\includegraphics[scale=0.65]{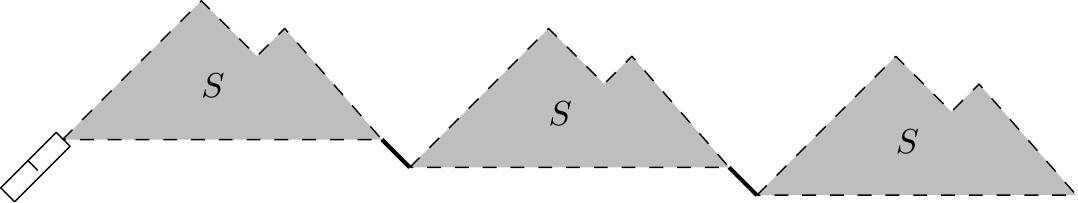}}}\right\}
\end{align*}
\caption{Equations relating the sets counted by $S(c, t)$, $S_1(c, t)$, and $S_2(c, t)$, where $S_1(c, t)$ and $S_2(c, t)$ are the generating functions for decorated Dyck paths beginning with $(1, 1)$ and $(2, 2)$, respectively.}
\label{figDecomp}
\end{center}
\end{figure}

\section{Enumeration}\label{sec:genfunc}

In this section, we carry out the enumeration of unlabelled and
labelled $(3+1)$-free posets by reducing it to the enumeration of
unlabelled and labelled bicoloured graphs. Our approach is to consider such a bicoloured graph as a $(3+1)$-free poset in the natural way (with colour classes `top' and `bottom') and to apply the machinery of \autoref{sec:skeleta}, as shown in the following lemma.

\begin{lemma} \label{lem:masterSkelBip}
The ordinary generating function for skeleta of bicoloured graphs is given by
 \begin{multline}
\label{eq:BipTangles}
\sum_{r_1,r_2,s \geq 0} 
 \left(\text{\parbox{.42\linewidth}{\# of
        skeleta of bicoloured graphs with $r_1$ clone sets on
        level $1$, $r_2$ clone sets on level $2$, and $s$
        tangles}}\right) c_1^{r_1} c_2^{r_2} t_{12}^s \\
  = \left(1-\frac{c_1}{1+c_1} - \frac{c_2}{1+c_2} - t_{12}\right)^{-1}.
\end{multline}
\end{lemma}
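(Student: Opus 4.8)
The plan is to identify the skeleta of bicoloured graphs with an explicit language of words over three letters and then enumerate that language. Throughout I regard a bicoloured graph as a two-level object, with the bottom colour class as ``level~$1$'' and the top colour class as ``level~$2$'', so that its tangle decomposition involves only clone sets on level~$1$, clone sets on level~$2$, and tangles straddling the two levels. Every compatible listing therefore yields a word over the alphabet $\{c_1, c_2, t_{12}\}$, and the generating function in question is simply the one counting the resulting words.

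The first key step is to check that no two distinct letters among $c_1, c_2, t_{12}$ commute in $M$: in the defining relations one has $|1 - 2| = 1 < 2$ for $c_1 c_2$, while the choice $j = 1$ forces the condition ``$i \le j - 2$ or $i \ge j + 3$'' to fail for both $c_1 t_{12}$ and $c_2 t_{12}$. Hence the submonoid generated by these three letters is free, each word represents a distinct element of $M$, and counting skeleta is the same as counting words. Next I would pin down which words occur by specializing \autoref{thm:skel-char}. Condition~\ref*{itm:no-repeat} survives and forbids exactly the factors $c_1 c_1$ and $c_2 c_2$ (two clone sets on one level listed consecutively could be merged into a single clone set). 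Condition~\ref*{itm:staircase}, by contrast, must be dropped: an isolated top vertex is a perfectly good level-$2$ clone set lying above nothing, so a skeleton may legitimately begin with $c_2$. This relaxation is exactly what the later assembly of posets requires, since a vertex isolated within one two-level block need not remain isolated in the full poset. Thus the skeleta of bicoloured graphs are precisely the words over $\{c_1, c_2, t_{12}\}$ that avoid the factors $c_1 c_1$ and $c_2 c_2$.

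It remains to enumerate these words, weighting each letter by the like-named variable. By inclusion--exclusion on the forbidden factors, a maximal run of $c_1$'s carries the signed series $\sum_{k \ge 1}(-1)^{k-1} c_1^{\,k} = \tfrac{c_1}{1+c_1}$ (and likewise $\tfrac{c_2}{1+c_2}$ for $c_2$), while each $t_{12}$ contributes $t_{12}$; summing over all sequences of these pieces gives $\bigl(1 - \tfrac{c_1}{1+c_1} - \tfrac{c_2}{1+c_2} - t_{12}\bigr)^{-1}$. That this is correct follows because the net contribution of a run of length $m$ is $\sum_{j=1}^{m} \binom{m-1}{j-1}(-1)^{m-j} = 0^{m-1}$, equal to $1$ when $m = 1$ and $0$ otherwise, so each admissible word is counted once and every word containing $c_1 c_1$ or $c_2 c_2$ cancels. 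A transfer-matrix count by last letter (with continuation series $x, y, z$ following $c_1, c_2, t_{12}$, satisfying $z = 1 + c_1 x + c_2 y + t_{12} z$ and $z = (1+c_1)x = (1+c_2)y$) yields the same rational function; reassuringly the total comes out to $z$ itself, rather than the smaller series one would obtain by forcing a leading $c_1$ or $t_{12}$, which independently confirms that condition~\ref*{itm:staircase} is correctly omitted.

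The main obstacle I anticipate is conceptual rather than computational, namely fixing the exact word language: justifying that condition~\ref*{itm:no-repeat} of \autoref{thm:skel-char} is retained while condition~\ref*{itm:staircase} becomes vacuous for bicoloured graphs. Once the language is correctly identified, both the inclusion--exclusion and the transfer-matrix computations are short and routine.
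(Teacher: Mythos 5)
Your proposal is correct and is essentially the intended argument (the extended abstract omits the proof of this lemma, but your route matches how it is used downstream in \autoref{thm:genfcnbipunlab} and \autoref{thm:genfcnbiplab}): since $c_1$, $c_2$, $t_{12}$ pairwise fail to commute in $M$, skeleta of bicoloured graphs are exactly the words over $\{c_1, c_2, t_{12}\}$ avoiding the factors $c_1 c_1$ and $c_2 c_2$, with the initial-letter condition of \autoref{thm:skel-char} correctly discarded because isolated vertices in the top colour class are legitimate level-$2$ clone sets. Your run-substitution/transfer-matrix count of that language is the standard one and yields the stated rational function.
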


Now that we have an explicit expression for the generating function of
skeleta of bicoloured graphs, we can perform
appropriate substitutions to get equations relating the generating functions
for tangles and for bicoloured graphs.

\begin{theorem}
\label{thm:genfcnbipunlab}
  Let $\Bu(x, y) \in \QQ[[x, y]]$ be the ordinary generating function for unlabelled bicoloured graphs, up to isomorphism. Then, the ordinary generating function for unlabelled tangles is
  \[
    \Tu(x, y) = 1 - x - y - \Bu(x, y)^{-1}.
  \]
\end{theorem}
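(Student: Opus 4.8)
The plan is to realize every unlabelled bicoloured graph as a two-level $(3+1)$-free poset, with its two colour classes playing the roles of level $1$ and level $2$, and then to run it through the skeleton machinery of \autoref{sec:skeleta}, whose bicoloured specialization is already recorded in \autoref{lem:masterSkelBip}. In this setting a skeleton is a word in just three letters: $c_1$ for a clone set of bottom vertices, $c_2$ for a clone set of top vertices, and $t_{12}$ for a tangle. By \autoref{cor:data}, a bicoloured graph is determined up to isomorphism by its skeleton together with the cardinality of each clone set and the isomorphism class of each tangle, and conversely every such choice of data built on a valid skeleton produces a bicoloured graph. I would phrase this as a weight-preserving bijection between bicoloured graphs and triples (skeleton, tuple of clone-set sizes, tuple of tangle isomorphism classes), where the weight records the number of bottom vertices (marked by $x$) and of top vertices (marked by $y$).

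Since this weight factors as a product over the parts, I would obtain $\Bu(x,y)$ from the skeleton generating function $S_{\mathrm{bicol}}(c_1,c_2,t_{12})$ of \autoref{lem:masterSkelBip} by substituting into each letter the generating function of its part. A bottom clone set has size $\geq 1$ and so contributes $x + x^2 + \cdots = x/(1-x)$, so I substitute $c_1 \to x/(1-x)$ and, symmetrically, $c_2 \to y/(1-y)$; each tangle letter is replaced by the generating function $\Tu(x,y)$ of all tangle isomorphism classes, so $t_{12} \to \Tu(x,y)$. The key observation is that $c_1$ enters the closed form of \autoref{lem:masterSkelBip} only through $c_1/(1+c_1)$, and
\[
  \frac{c_1}{1+c_1}\Big|_{c_1 = x/(1-x)} = x
  \qquad\text{and}\qquad
  \frac{c_2}{1+c_2}\Big|_{c_2 = y/(1-y)} = y.
\]
Feeding these into $\bigl(1 - \tfrac{c_1}{1+c_1} - \tfrac{c_2}{1+c_2} - t_{12}\bigr)^{-1}$ collapses it to $\Bu(x,y) = \bigl(1 - x - y - \Tu(x,y)\bigr)^{-1}$, and solving for $\Tu$ gives the claimed $\Tu(x,y) = 1 - x - y - \Bu(x,y)^{-1}$. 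Formally the substitution is legitimate because $x/(1-x)$, $y/(1-y)$, and $\Tu(x,y)$ each have zero constant term, so every coefficient on the right is a finite sum.

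The genuinely delicate step, and the one I expect to be the main obstacle, is justifying the substitution principle itself: that plugging the part-generating-functions into $S_{\mathrm{bicol}}$ enumerates bicoloured graphs exactly once each, with the correct weight. This rests entirely on the bijection of \autoref{cor:data} and on the multiplicativity of the weight across parts, so I would check carefully that the colour classes of a bicoloured graph align with the level-$1$/level-$2$ roles used in \autoref{lem:masterSkelBip}, including degenerate cases\hspace{0.08em}---\hspace{0.08em}an isolated bottom vertex must be counted by $c_1$ and an isolated top vertex by $c_2$ even though both are poset-minimal\hspace{0.08em}---\hspace{0.08em}and that filling each clone-set slot with an independent size $\geq 1$ neither creates coincidences nor conflicts with the no-$c_i c_i$ constraint, which is already absorbed into the $c_i/(1+c_i)$ factors at the level of skeleta. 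Once this bookkeeping is secured, the remaining computation is the one-line simplification above, so the combinatorial justification, not the algebra, is where the work lies.
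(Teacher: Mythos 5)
Your proposal is correct and follows essentially the same route as the paper: the paper's proof is exactly the substitution $c_1 = x/(1-x)$, $c_2 = y/(1-y)$, $t_{12} = \Tu(x,y)$ into the closed form of \autoref{lem:masterSkelBip}, followed by solving for $\Tu$. Your additional care about the bijection underlying the substitution principle and the degenerate cases is reasonable elaboration, but it does not change the argument.
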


\begin{proof}
This follows from \autoref{lem:masterSkelBip} by plugging in the values $c_1= x/(1-x)$ and $c_2=y/(1-y)$ for the clone sets of unlabelled vertices and $t = \Tu(x,y)$ for the tangles in \eqref{eq:BipTangles}.
\end{proof}

\begin{theorem}
\label{thm:genfcnbiplab}
  Let $\Bl(x, y) \in \QQ[[x, y]]$ be the exponential generating function for labelled bicoloured graphs, that is, the formal power series
  \[\Bl(x, y) = \sum_{i, j \geq 0} 2^{ij} \frac{x^i y^j}{i! j!}.\]
  Then, the exponential generating function for labelled tangles is
  \[
    \Tl(x, y) = e^{-x} + e^{-y} - 1 - \Bl(x, y)^{-1}.
  \]
\end{theorem}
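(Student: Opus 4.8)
The plan is to follow the proof of \autoref{thm:genfcnbipunlab} at the structural level, but with \emph{exponential} generating functions in place of ordinary ones, and with correspondingly adjusted values plugged into the master series \eqref{eq:BipTangles} of \autoref{lem:masterSkelBip}. In the labelled setting a clone set on a given level is just a nonempty set of interchangeable vertices of that level's colour, so there is exactly one such structure on each label set; hence the two-sort EGF of a single level-$1$ clone set is $\sum_{i \geq 1} x^i/i! = e^x - 1$, that of a single level-$2$ clone set is $e^y - 1$, and a single tangle contributes $\Tl(x,y)$ by definition. First I would therefore set $c_1 = e^x - 1$, $c_2 = e^y - 1$, and $t_{12} = \Tl(x,y)$ in \eqref{eq:BipTangles}.

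The step I expect to be the main obstacle is justifying that substituting these EGFs into the \emph{ordinary} generating function of \autoref{lem:masterSkelBip} really produces the EGF $\Bl(x,y)$ for labelled bicoloured graphs; in the unlabelled case the analogous composition is just the ordinary product and is visibly correct, whereas here it must distribute labels. The key observation is that, for a height-two poset, the skeleton is a genuine word: none of the commutation relations defining $M$ ever applies to the restricted alphabet $\{c_1, c_2, t_{12}\}$ (for instance $c_1 c_2 \neq c_2 c_1$ since $\abs{1-2} = 1$, and neither $c_1$ nor $c_2$ commutes with $t_{12}$), so these letters generate a free submonoid. Consequently, by the analogue of \autoref{cor:data} for bicoloured graphs, a labelled bicoloured graph is precisely a linear sequence of parts together with a distribution of the top- and bottom-vertex labels among them. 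For ordered sequences the two-sort exponential product applies: summing over all label distributions replaces each letter by the EGF of its part, so the EGF of the assembly is obtained from $\sum_{w} c_1^{\#c_1(w)} c_2^{\#c_2(w)} t_{12}^{\#t_{12}(w)}$ --- which \autoref{lem:masterSkelBip} evaluates in closed form --- simply by the substitution above. The no-repeat condition encoded in the factors $c_i/(1+c_i)$ constrains only the \emph{types} of adjacent parts, not their labels, so it is captured entirely by the word structure and survives the substitution.

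It then remains to carry out the routine algebra. Since $c_1/(1+c_1)$ becomes $(e^x-1)/e^x = 1 - e^{-x}$ and $c_2/(1+c_2)$ becomes $1 - e^{-y}$, equation \eqref{eq:BipTangles} gives
\[
  \Bl(x,y) = \bigl(1 - (1 - e^{-x}) - (1 - e^{-y}) - \Tl(x,y)\bigr)^{-1} = \bigl(e^{-x} + e^{-y} - 1 - \Tl(x,y)\bigr)^{-1},
\]
the substitution being legitimate because $e^x - 1$, $e^y - 1$, and $\Tl(x,y)$ all have zero constant term, so the geometric series converges formally. Inverting and solving for $\Tl$ then yields $\Tl(x,y) = e^{-x} + e^{-y} - 1 - \Bl(x,y)^{-1}$, as claimed. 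As a sanity check, the right-hand side has no terms of total degree at most $3$, consistent with the fact that the smallest tangle is a single induced $(2+2)$ with two vertices of each colour.
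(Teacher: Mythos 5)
Your proposal is correct and follows the paper's proof exactly: the paper likewise obtains the result by substituting $c_1 = e^x - 1$, $c_2 = e^y - 1$, and $t_{12} = \Tl(x,y)$ into \eqref{eq:BipTangles} and solving for $\Tl$. The extra justification you supply (that $c_1$, $c_2$, $t_{12}$ generate a free submonoid of $M$, so skeleta of bicoloured graphs are genuine words and the exponential product applies) is a correct elaboration of details the paper leaves implicit.
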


\begin{proof}
This follows from \autoref{lem:masterSkelBip} by plugging in the values $c_1= e^x-1$ and $c_2=e^y-1$ for the clone sets of labelled vertices and $t = \Tl(x,y)$ for the tangles in \eqref{eq:BipTangles}.
\end{proof}

With these expressions for the generating functions $\Tu(x,y)$ and $\Tl(x,y)$ in hand, the following corollaries of \autoref{thm:skeleta-genfunc} yield the equations~\eqref{ordgs} and~\eqref{expgs} from the introduction.

\begin{corollary}
\label{cor:genfcn3p1pos}
  Let $S(c, t)$ be the generating function of \autoref{thm:skeleta-genfunc} for skeleta. Then, the ordinary generating function for unlabelled $(3+1)$-free posets is
  \[
    \sum_{n\geq 0} \pu(n) x^n
      = S\big(x/(1-x), \Tu(x, x)\big).
  \]
\end{corollary}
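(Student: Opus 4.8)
The plan is to read this identity as a substitution of generating functions into the skeleton series $S(c,t)$, with each clone set letter and each tangle letter of a skeleton replaced by the generating function enumerating the objects that may legitimately be plugged into it. The driving fact is \autoref{cor:data}: an isomorphism class of $(3+1)$-free poset $P$ is precisely the data of a skeleton $m \in M$ together with, for each clone set letter $c_i$ of $m$, the cardinality of the corresponding clone set, and, for each tangle letter $t_{i\,i+1}$, the isomorphism class of the corresponding tangle. Combined with the assertion preceding \autoref{cor:data} that every such decoration of every skeleton occurs, this is a bijection between isomorphism classes of $(3+1)$-free posets and decorated skeleta.

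First I would translate this bijection into weights. Since $\abs{P}$ is the sum of the vertex counts of the parts, and these counts are recorded only through the decorations, the weight $x^{\abs{P}}$ factors as a product over the letters of $m$. A clone set is an unlabelled part of arbitrary cardinality $k \geq 1$, contributing $\sum_{k \geq 1} x^k = x/(1-x)$ regardless of its level. A tangle contributes, summed over its isomorphism classes weighted by total number of vertices, the series $\Tu(x,x)$: here $\Tu(x,y)$ from \autoref{thm:genfcnbipunlab} is the unlabelled tangle series marking top vertices by $x$ and bottom vertices by $y$ (viewing a tangle as a bicoloured graph), so setting $y=x$ counts by the total number of vertices, again independently of the tangle's level.

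Next I would assemble these. For a fixed skeleton $m$ with $r$ clone set letters and $s$ tangle letters, summing $x^{\abs{P}}$ over all decorations of $m$ gives $(x/(1-x))^{r}\,\Tu(x,x)^{s}$, since distinct parts are decorated independently. Summing over all skeleta and invoking \autoref{thm:skeleta-genfunc}, which gives $S(c,t)=\sum_m c^{r(m)} t^{s(m)}$, yields
\[
  \sum_{n \geq 0} \pu(n)\, x^n
    = \sum_m \left(\frac{x}{1-x}\right)^{r(m)} \Tu(x,x)^{s(m)}
    = S\!\left(\frac{x}{1-x},\, \Tu(x,x)\right),
\]
which is the claimed identity.

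Finally I would verify that this substitution is legitimate as an operation on formal power series, the only genuine technical point. It suffices that both substituted series have vanishing constant term, so that each coefficient of $x^n$ on the right receives contributions from only finitely many skeleta. Indeed $x/(1-x)$ has zero constant term, and $\Tu(x,x)$ has no constant or linear term because every tangle has at least two top and two bottom vertices, hence at least four vertices; thus a monomial $c^r t^s$ maps to a series of order at least $r+4s$ in $x$, the number of skeleta with given $(r,s)$ is finite by \autoref{thm:skeleta-genfunc}, and the composition is well defined. The main (modest) obstacle is simply pinning down the correct clone set and tangle weights — in particular the collapse $\Tu(x,y) \mapsto \Tu(x,x)$ that turns the bivariate top/bottom enumeration into a univariate count by total size — after which the statement is immediate from \autoref{cor:data} and \autoref{thm:skeleta-genfunc}.
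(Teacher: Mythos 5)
Your proposal is correct and follows essentially the same route the paper intends: the extended abstract presents this as an immediate corollary obtained by substituting the clone-set series $x/(1-x)$ and the tangle series $\Tu(x,x)$ into $S(c,t)$, justified by the bijection of \autoref{cor:data} between $(3+1)$-free posets and arbitrarily decorated skeleta. Your additional check that the substitution is well defined as a formal power series operation (both substituted series having zero constant term, with $\Tu(x,x)$ in fact of order $4$) is a sound, if routine, supplement to what the paper leaves implicit.
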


\begin{corollary}
  Let $S(c, t)$ be the generating function of \autoref{thm:skeleta-genfunc} for skeleta. Then, the exponential generating function for labelled $(3+1)$-free posets is
  \[
    \sum_{n\geq 0} \pl(n) \frac{x^n}{n!}
      = S\big(e^x - 1, \Tl(x, x)\big).
  \]
\end{corollary}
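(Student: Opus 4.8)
The plan is to obtain this as the exponential-generating-function analogue of \autoref{cor:genfcn3p1pos}, by substituting into the skeleton generating function $S(c,t)$ of \autoref{thm:skeleta-genfunc} the EGFs that count a single clone set and a single tangle on labelled vertices. By \autoref{cor:data}, a $(3+1)$-free poset is determined up to isomorphism by its skeleton together with the size of each clone set and the isomorphism type of each tangle; refining this to labelled posets, a $(3+1)$-free poset on the vertex set $\{1,\dots,n\}$ is the same data as a skeleton $m$, a partition of $\{1,\dots,n\}$ into blocks indexed by the letters of $m$, and a labelled clone set or labelled tangle structure on the corresponding block. Since $S(c,t) = \sum_{r,s} N(r,s)\, c^r t^s$, where $N(r,s)$ is the number of skeleta with $r$ clone-set letters and $s$ tangle letters, the desired EGF should equal $\sum_{r,s} N(r,s)\, c(x)^r\, t(x)^s = S\big(c(x), t(x)\big)$, provided $c(x)$ and $t(x)$ are the EGFs for one labelled clone set and one labelled tangle respectively.

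First I would identify these two EGFs. A clone set on a block of size $k \geq 1$ is a single structure, since its vertices are pairwise incomparable clones, so there is exactly one such labelled object of each size; this gives $c(x) = \sum_{k\geq 1} x^k/k! = e^x - 1$. A tangle on a labelled block splits it into a top part and a bottom part and then carries a tangle structure, so counting by total number of vertices amounts to setting the two marking variables of $\Tl(x,y)$ equal, that is $t(x) = \Tl(x,x)$; concretely, choosing which labels lie on top contributes the binomial coefficient that turns $\sum_{i,j} t_{i,j}\, x^i y^j/(i!\,j!)$ at $y=x$ into the EGF for labelled tangles by total size. Substituting these gives exactly $S\big(e^x - 1,\ \Tl(x,x)\big)$, the claimed formula. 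To recover \eqref{expgs} one then substitutes $\Tl(x,x) = 2e^{-x} - 1 - \Bl(x,x)^{-1}$ from \autoref{thm:genfcnbiplab} (with $y=x$), noting that $\Bl(x,x)$ equals the $\Bl(x)$ of the introduction.

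The heart of the argument, and the step I expect to be the main obstacle, is justifying the composition rule, i.e.\ that the labels of $P$ distribute \emph{freely} among its parts so that the EGF product applies. This requires two things. First, the correspondence of \autoref{cor:data} must be promoted to a genuine bijection on \emph{labelled} posets: given the skeleton and a labelled structure on each block, \autoref{thm:listing} determines all comparabilities between distinct parts and each part's structure determines its internal order, so the labelled poset is reconstructed uniquely; conversely the tangle decomposition recovers the blocks and their structures. Second, the letters of the skeleton must index genuinely \emph{distinguishable} slots, so that no division by a symmetry of $m$ is needed and the plain product $c(x)^r t(x)^s$ is correct. This holds because a trace, viewed as a heap of pieces, has trivial automorphism group: occurrences of a common letter are linearly ordered in the dependence order (no generator of $M$ commutes with itself), while occurrences of distinct letters cannot be interchanged. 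Granting distinguishable slots, distributing $n$ labels into the blocks with the prescribed structure on each is precisely the product rule for labelled structures, and summing over skeleta with multiplicity $N(r,s)$ yields $S\big(e^x-1, \Tl(x,x)\big)$. The same template proves \autoref{cor:genfcn3p1pos}, with the ordinary product rule and the single-clone-set and single-tangle ordinary generating functions $x/(1-x)$ and $\Tu(x,x)$ in place of their labelled counterparts.
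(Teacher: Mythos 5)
Your proposal is correct and follows essentially the same route as the paper: the corollary is obtained by substituting into the skeleton generating function $S(c,t)$ of \autoref{thm:skeleta-genfunc} the exponential generating functions for a single labelled clone set ($e^x-1$) and a single labelled tangle ($\Tl(x,x)$), justified by the reconstruction statement of \autoref{cor:data}, and then \autoref{thm:genfcnbiplab} recovers~\eqref{expgs}. Your added care about the distinguishability of the skeleton's letter-slots and the labelled product rule is a faithful elaboration of the argument the paper leaves implicit.
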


\begin{remark}
Fran\c{c}ois Bergeron has pointed out to us that the results of this
  section can be generalized to obtain the cycle index series (see~\cite{BLL}) for the species of $(3+1)$-free posets.
\end{remark}

\section{Acknowledgements}\label{sec:thanks}

This work grew out of a working session of the algebraic combinatorics group at LaCIM with active participation from Chris Berg, Franco Saliola, Luis Serrano, and the authors.
It was facilitated by computer exploration using various types of mathematical software, including Sage~\cite{sage} and its algebraic combinatorics features developed by the Sage-Combinat community~\cite{sage-combinat}.
In addition, the authors would like to thank Joel Brewster Lewis for
several conversations and
suggesting looking at asymptotics and
Mark Skandera and Yan X Zhang for helpful discussions.

\end{document}